\DeclareMathOperator{\Aut}{Aut}
\newcommand{\C}{\mathbb{C}}
\newcommand{\K}{\mathcal{K}}
\newcommand{\Manoa}{M\=anoa}
\newcommand{\Hawaii}{Hawai\kern.05em`\kern.05em\relax i}
\newcommand{\alg}{\text{alg}}
\newcommand{\inj}{{\operatorname{inj}}}
\newcommand{\Ind}{{\operatorname{Ind}}}
\newcommand{\B}{\mathcal{B}}
\newcommand{\M}{\mathcal{M}}
\newcommand*{\into}{\hookrightarrow}
\newcommand*{\onto}{\twoheadrightarrow}
\newcommand*{\red}{r}
\renewcommand*{\max}{\mathrm{max}}
\newcommand*{\cont}{C}
\newcommand*{\contz}{\cont_0}
\newcommand*{\contub}{\cont_{ub}}
\newcommand*{\dual}[1]{\widehat{#1}}
\newcommand*{\sbe}{\subseteq} 
\newcommand*{\cstar}{\texorpdfstring{$C^*$\nobreakdash-\hspace{0pt}}{*-}}
\theoremstyle{plain}
\newtheorem{theorem}{Theorem}[section]
\newtheorem{lemma}[theorem]{Lemma}
\newtheorem{corollary}[theorem]{Corollary}
\newtheorem{proposition}[theorem]{Proposition}
\newtheorem{definition-theorem}[theorem]{Definition / Theorem}
\newtheorem*{conjecture*}{Conjecture}
\newtheorem*{theorem*}{Theorem}
\theoremstyle{definition}
\newtheorem{definition}[theorem]{Definition}
\newtheorem{example}[theorem]{Example}
\theoremstyle{remark}
\newtheorem{remark}[theorem]{Remark}
\newtheorem*{example*}{Example}  
\newtheorem*{remark*}{Remark}
\begin{document}
\title{The maximal injective crossed product}
\author{Alcides Buss}
\email{alcides@mtm.ufsc.br}
\address{Departamento de Matem\'atica\\
 Universidade Federal de Santa Catarina\\
 88.040-900 Florian\'opolis-SC\\
 Brazil}

\author{Siegfried Echterhoff}
\email{echters@uni-muenster.de}
\address{Mathematisches Institut\\
 Westf\"alische Wilhelms-Universit\"at M\"un\-ster\\
 Einsteinstr.\ 62\\
 48149 M\"unster\\
 Germany}

\author{Rufus Willett}
\email{rufus@math.hawaii.edu}
\address{Mathematics Department\\
 University of \Hawaii~at \Manoa\\
Keller 401A \\
2565 McCarthy Mall \\
 Honolulu\\
 HI 96822\\
USA}

\begin{abstract}
A crossed product functor is said to be \emph{injective} if it takes injective morphisms to injective morphisms.  In this paper we show that every locally compact group $G$ admits a maximal injective crossed product $A\mapsto A\rtimes_{\inj}G$.  Moreover, we give an explicit construction of this functor that depends only on the maximal crossed product and the existence of $G$-injective $C^*$-algebras; this is a sort of a `dual' result to the construction of the minimal exact crossed product functor, the latter having been studied for its relationship to the Baum-Connes conjecture.  It turns out that $\rtimes_\inj$ has  interesting connections to exactness, the local lifting property, amenable traces, and the weak expectation property.
\end{abstract}

\subjclass[2010]{46L55, 46L80}
\keywords{Exotic crossed products, injective $C^*$-algebras, exact groups, local lifting property, weak expectation property.}

\maketitle

\section{Introduction}
Given a fixed locally compact group $G$, an (exotic) crossed product functor $\rtimes_\mu$ for $G$ is a functor $A\mapsto A\rtimes_\mu G$ from the 
category of $G$-$C^*$-algebras into the category of $C^*$-algebras in which  $A\rtimes_\mu G$ is a $C^*$-completion of the algebraic crossed product $A\rtimes_\alg G=C_c(G,A)$ with respect to a $C^*$-norm $\|\cdot \|_\mu$ which satisfies
$$\| \cdot \|_r\leq \|\cdot\|_\mu\leq \|\cdot\|_\max,$$
where $\|\cdot\|_r$ and $\|\cdot\|_\max$ denote the norms of the reduced and maximal crossed products, respectively. 
Thus we see that for any exotic crossed product the identity map on $C_c(G,A)$ induces surjections
$$A\rtimes_{\max}G\onto A\rtimes_\mu G\onto A\rtimes_r G.$$
Recently, the study of exotic crossed products has become a focus of research not only because of interesting connections to the 
Baum-Connes conjecture, as revealed in \cites{Baum:2013kx, Buss:2014aa, Buss:2015ty} but also because of the fact that exotic crossed products and group
algebras provide interesting new examples of $C^*$-algebras attached to locally compact groups and their actions (e.g., see \cites{Brown:2011fk, 
Kaliszewski:2012fr, Wiersma:2015tx, Wiersma:2016kr, Ruan:2016kl}).

A crossed product functor $A\mapsto A\rtimes_\mu G$ is said to be \emph{injective} if it takes injective morphisms to injective morphisms.  Our goal in this paper is to show that there is always a maximal injective crossed product $\rtimes_{\inj}$; this is a sort of `dual' result to the existence of a minimal exact crossed product functor that has been studied for its relationship to the Baum-Connes conjecture (we refer to \cite{Buss:2018pw} for the most recent results on this functor).  There are no applications given here to Baum-Connes, but it turns out that $\rtimes_\inj$ has some interesting connections to exactness, the local lifting property, amenable traces, and the weak expectation property, as well as $G$-injective algebras generally; our goal is to elucidate these.  We hope these show that $\rtimes_\inj$ is a natural object.

After this introduction we start with a preliminary section giving a self-contained introduction to $G$-injective $C^*$-algebras which have been studied  first by Hamana in \cite{Hamana:1985aa}. The main fact we need  in this paper is the observation that every $G$-algebra embeds equivariantly into a $G$-injective one 
(see Corollary \ref{g inj exist} below). The construction of the maximal injective crossed product $A\rtimes_\inj G$ is given in Section \ref{basic sec}.
Our results show that it can be described as the completion of the algebraic crossed product $A\rtimes_{\alg} G$ inside the maximal 
crossed product $B\rtimes_\max G$ if $A\into B$ is any $G$-equivariant embedding of $A$ into a $G$-injective $C^*$-algebra $B$. 
We show that this construction gives indeed an injective crossed product functor which is maximal among all injective crossed product functors for $G$.
It follows from this that $A\rtimes_\inj G=A\rtimes_\max G$ for every $G$-injective algebra $A$. In fact, 
in  Section \ref{basic sec} we show that a similar statement holds for all $G$-algebras which satisfy a $G$-equivariant version of Lance's weak expectation property
(see Proposition \ref{prop-GWEP}).

In Section \ref{sec LLP} we study some connections between properties of the injective crossed product $\rtimes_\inj$, exactness of $G$,  and 
 the local lifting property (LLP) of $C^*_\max(G)$. Using a recent characterisation of exact locally compact groups as those groups  which admit 
amenable actions on compact spaces due Brodzki, Cave and Li in \cite{Brodzki:2015kb}, it is fairly easy to show that the injective functor 
coincides with the reduced crossed product functor  for all exact $G$. This implies the interesting observation that for exact $G$ the reduced crossed product functor is the {\em  only injective crossed product functor} for $G$, although (if $G$ is not amenable) 
there are often uncountably many crossed product functors with other good properties (e.g. see \cite{Buss:2015ty} for a detailed discussion).
To make sure that we do not talk about  reduced crossed products only,  we show 
 that for a certain class of 
 non-exact groups as constructed by Osajda  (\cite{Osajda:2014ys}), we do have $\rtimes_{\inj}\neq \rtimes_r$. 
Another class of groups for which $\rtimes_{\inj}= \rtimes_r$ is given by those discrete groups $G$ for which the maximal group algebra $C_\max^*(G)$ 
has the local lifting property (LLP) (see Definition \ref{ex def}).  Together with the above result on Osajda's groups, this shows that for all these groups 
the group algebra $C_\max^*(G)$ does not have the local lifting property. Indeed, our results suggest that LLP for $C_\max^*(G)$ 
would possibly imply exactness, while we observe that the converse direction fails, since the only other known examples 
of groups with $C_\max^*(G)$ not having LLP due to Thom in \cite{Thom:2010aa} turn out to be exact.

In Section \ref{sec amenable} we study the group algebra $C_\inj^*(G)=\C\rtimes_\inj G$ associated to $\rtimes_\inj$.
Extending a well-known result for $C_r^*(G)$, we show that $C_\inj^*(G)$ admits an amenable trace if and only if $G$ is amenable,
which hints at some similarity between $C_\inj^*(G)$ and $C_r^*(G)$.
Since the trivial representation $1_G: C^*_\max(G)\to \C$ is always an amenable trace, this
 directly implies that $C_\inj^*(G)=C_\max^*(G)$ if and only if $G$ is amenable. 
There is a certain similarity between the defining properties of an amenable trace and Lance's weak expectation property (WEP)
 for a $C^*$-algebra $B$, and for
discrete groups we can show that if $A\rtimes_\inj G$ has the WEP, then  $A\rtimes_\inj G=A\rtimes_\max G$.
If $A=\C$ this implies that $C_\inj^*(G)$ has the WEP if and only if  $G$ is amenable, which gives another variant of a well known result for 
the reduced group algebra $C_r^*(G)$ (see \cite{Brown:2008qy}*{Proposition 3.6.9}).
In particular, if $G$ is discrete and exact, then our result shows that the WEP for $A\rtimes_rG$ implies that 
$A\rtimes_rG\cong A\rtimes_\max G$, which indicates that in general
 the  WEP for $A\rtimes_rG$ should be related to some kind of amenability for the action of $G$ on $A$. 

In Section \ref{sec subgroups} we show that the injective crossed product behaves quite naturally with respect to closed subgroups.
It turns out that the injective crossed product functor for a locally compact group $G$ always `restricts' to the injective functor for $M$,
for every closed subgroup $M$ of $G$. Moreover, if $M$ is an open supgroup of $G$  and $A$ is a $G$-algebra, then $A\rtimes_\inj M$ always 
embeds faithfully into $A\rtimes_\inj G$, a fact well known for the maximal and the reduced crossed products.
We close this paper in Section \ref{sec questions} with a short discussion of various open questions regarding the maximal injective 
crossed product functor.

The authors would like to thank Matthew Wiersma for several interesting discussions on the subject.  

Most of the work on this paper was carried out during a visit of the first and third authors to the second author at the Westf\"{a}lische Wilhelms-Universit\"{a}t M\"{u}nster; these authors would like to thank the second author, and that institution, for their hospitality.  

The authors were supported by Deutsche Forschungsgemeinschaft (SFB 878, Groups, Geometry \& Actions), by CNPq/CAPES -- Humboldt, and by the US NSF (DMS 1401126 and DMS 1564281)

\section{Preliminaries on $G$-injectivity}\label{g inj sec}

In this section, we give some background on $G$-injectivity for a locally compact group $G$.  This is presumably well-known to at least some experts, but we provide details for the reader's convenience and as we could not find exactly what we wanted in the literature.

\begin{definition}\label{g inj}
An equivariant ccp map $\phi:A\to B$ is \emph{$G$-injective} if for any equivariant injective $*$-homomorphism $\iota:A\to C$ the dashed arrow below 
$$
\xymatrix{ C \ar@{-->}[dr]^-{\widetilde{\phi}} &  \\ A \ar[u]^-\iota \ar[r]^-\phi & B } 
$$
can be filled in with an equivariant ccp map.

A $G$-$C^*$-algebra $B$ is \emph{$G$-injective} if any equivariant ccp map $A\to B$ is $G$-injective.
\end{definition}

For the next result, recall that a $C^*$-algebra $B$ is \emph{injective} if it is $G$-injective in the above sense for $G$ the trivial group.  The most important example is $\mathcal{B}(H)$ for any Hilbert space $H$: this is a consequence of Arveson's extension theorem (see \cite{Brown:2008qy}*{Theorem~1.6.1}).  The following result is essentially the same as \cite{Hamana:1985aa}*{Lemma 2.2}, but as we work in a slightly different context, we give a proof for the reader's convenience.

\begin{proposition}\label{inj exist}
Let $B$ be an injective $C^*$-algebra, and let $C_{ub}(G,B)$ denote the $C^*$-algebra of uniformly continuous (for the left-invariant uniform structure on $G$) bounded functions from $G$ to $B$, equipped with the $G$-action defined by 
$$
\gamma_g(f)(h):=f(g^{-1}h).
$$ 
Then $C_{ub}(G,B)$ is $G$-injective.
\end{proposition}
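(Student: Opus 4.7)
The plan is to reduce the $G$-equivariant extension problem for the (potentially complicated) target $C_{ub}(G,B)$ to an ordinary (non-equivariant) extension problem for the target $B$, using the already assumed injectivity of $B$. Concretely, suppose we are given an equivariant injective $*$-homomorphism $\iota:A\into C$ and an equivariant ccp map $\phi:A\to C_{ub}(G,B)$, with $G$-actions $\beta$ on $A$, $\alpha$ on $C$, and $\gamma$ on $C_{ub}(G,B)$. Composing $\phi$ with evaluation at $e\in G$ yields a ccp map
\[
\phi_e:A\to B,\qquad \phi_e(a):=\phi(a)(e),
\]
and since $B$ is injective Arveson's theorem (applied to $\iota$) provides a ccp extension $\psi:C\to B$ with $\psi\circ\iota=\phi_e$.

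I would then define the candidate extension by
\[
\widetilde{\phi}:C\to \contub(G,B),\qquad \widetilde{\phi}(c)(g):=\psi(\alpha_{g^{-1}}(c)).
\]
This formula is forced: any equivariant extension $\widetilde{\phi}$ must satisfy $\widetilde{\phi}(c)(g)=\gamma_{g^{-1}}(\widetilde{\phi}(c))(e)=\widetilde{\phi}(\alpha_{g^{-1}}c)(e)$, so the only freedom is to pick a ccp extension of $\phi_e$ at the single point $g=e$, and we have done exactly this via $\psi$. The routine checks are then: (i) $\widetilde{\phi}$ is ccp, because for each $g$ the evaluation $f\mapsto f(g)$ is a ccp map on $\contub(G,B)$, $\alpha_{g^{-1}}$ is a $*$-homomorphism on $C$, and $\psi$ is ccp; (ii) $\widetilde{\phi}\circ\iota=\phi$, since $\widetilde{\phi}(\iota(a))(g)=\psi(\iota(\beta_{g^{-1}}a))=\phi_e(\beta_{g^{-1}}a)=\phi(\beta_{g^{-1}}a)(e)=\gamma_{g^{-1}}(\phi(a))(e)=\phi(a)(g)$ using equivariance of $\phi$ and $\iota$; and (iii) $\widetilde{\phi}$ is equivariant, since a direct computation gives $\widetilde{\phi}(\alpha_g c)(h)=\psi(\alpha_{h^{-1}g}(c))=\widetilde{\phi}(c)(g^{-1}h)=\gamma_g(\widetilde{\phi}(c))(h)$.

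The one point that is not purely formal is verifying that $\widetilde{\phi}(c)$ actually lies in $\contub(G,B)$ rather than merely in $\ell^\infty(G,B)$. Boundedness is immediate since $\psi$ is contractive and $\alpha_{g^{-1}}$ is isometric. For uniform continuity, I would use that the $G$-action $\alpha$ on $C$ is strongly continuous by definition of a $G$-$C^*$-algebra, together with the fact that each $\alpha_g$ is a $*$-isomorphism, hence isometric. This lets one translate strong continuity of $g\mapsto \alpha_{g^{-1}}(c)$ at the identity into uniform continuity globally: given $\epsilon>0$, pick a neighbourhood $U$ of $e$ with $\|\alpha_u(c)-c\|<\epsilon$ for $u\in U$, and then for $g,h$ with $hg^{-1}\in U$ one has
\[
\|\alpha_{g^{-1}}(c)-\alpha_{h^{-1}}(c)\|=\|\alpha_{h^{-1}}(\alpha_{hg^{-1}}(c)-c)\|=\|\alpha_{hg^{-1}}(c)-c\|<\epsilon,
\]
and applying the contraction $\psi$ preserves this estimate. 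This is the step where the choice of uniform structure on $G$ matters, and it is the main (mild) obstacle in the argument; once it is in place, the construction above produces the required $G$-equivariant ccp extension $\widetilde{\phi}$, proving $G$-injectivity of $\contub(G,B)$.
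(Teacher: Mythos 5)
Your argument is correct and is essentially identical to the paper's proof: evaluate at $e$, extend the resulting ccp map $A\to B$ using injectivity of $B$, and reassemble the extension via $\widetilde{\phi}(c)(g):=\psi(\alpha_{g^{-1}}(c))$. You even supply the uniform-continuity verification that the paper leaves implicit, so nothing is missing.
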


\begin{proof}
Let $\phi:A\to C_{ub}(G,B)$ be an equivariant ccp map, and let $\iota:A\to C$ be an equivariant embedding.  Let $\delta_e:C_{ub}(G,B)\to B$ be defined by $f\mapsto f(e)$, and let $\psi:A\to B$ be defined by $\psi=\delta_e\circ \phi$, which is ccp.  Injectivity of $B$ gives a ccp extension $\widetilde{\psi}:C\to B$ of $\psi$ to $C$.  Let now $\alpha$ denote the action of $G$ on $C$, and define  
$$
\widetilde{\phi}:C\to C_{ub}(G,B), \quad \widetilde{\phi}(c)(g):=\widetilde{\psi}(\alpha_{g^{-1}}(c)).
$$
This function is equivariant, and has the property that $\widetilde{\phi}(c)(g)$ is positive or contractive for each $g\in G$ whenever $a$ has these properties.  Using the identification $M_n(C_{ub}(G,B))=C_{ub}(G,M_n(B))$, this gives that $\widetilde{\phi}$ is ccp; we leave the straightforward check that it extends $\phi$ to the reader.
\end{proof}

\begin{remark}\label{action no matter}
If $B$ is equipped with a non-trivial $G$-action $\beta$, we may also consider $C_{ub}(G,B)$ equipped with the `diagonal' type action 
$$
\gamma_g(f)(h):=\beta_gf(g^{-1}h).
$$
The resulting $G$-$C^*$-algebra is equivariantly isomorphic to the one from Proposition \ref{inj exist}, so is also $G$-injective whenever $B$ is (non-equivariantly) injective.
\end{remark}

\begin{corollary}\label{g inj emb exist}
For any $G$-$C^*$-algebra $A$ there exists a $G$-injective $C^*$-algebra $B$ and an equivariant embedding of $A$ into $B$.
\end{corollary}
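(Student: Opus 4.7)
The plan is to build the embedding explicitly, reducing to Proposition \ref{inj exist} applied to $B = \mathcal{B}(H)$ (which is injective by Arveson's extension theorem).

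First, choose any faithful $*$-representation $\pi : A \to \mathcal{B}(H)$; this exists because every $C^*$-algebra has a faithful (e.g.\ universal) representation. Let $\alpha$ denote the $G$-action on $A$. Define
\[
\iota : A \to C_{ub}(G, \mathcal{B}(H)), \qquad \iota(a)(g) := \pi(\alpha_{g^{-1}}(a)).
\]
I would then verify in turn: (i) for each $a \in A$ the function $\iota(a)$ takes values of norm at most $\|a\|$, so is bounded; (ii) it is uniformly continuous for the left-invariant uniform structure on $G$, because for $h$ close to the identity and any $g \in G$ one has
\[
\|\iota(a)(hg) - \iota(a)(g)\| = \|\pi(\alpha_{g^{-1}}(\alpha_{h^{-1}}(a) - a))\| \leq \|\alpha_{h^{-1}}(a) - a\|,
\]
which tends to zero as $h \to e$ uniformly in $g$, by strong continuity of $\alpha$ and the fact that $\alpha_{g^{-1}}$ is an isometry; (iii) $\iota$ is a $*$-homomorphism since $\pi$ and each $\alpha_g$ are, and it is injective because $\iota(a)(e) = \pi(a)$ and $\pi$ is faithful.

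Finally I would check equivariance: for $g, h \in G$ and $a \in A$,
\[
\gamma_g(\iota(a))(h) = \iota(a)(g^{-1}h) = \pi(\alpha_{h^{-1}g}(a)) = \pi(\alpha_{h^{-1}}(\alpha_g(a))) = \iota(\alpha_g(a))(h).
\]
Thus $\iota$ is an equivariant embedding of $A$ into the $G$-$C^*$-algebra $C_{ub}(G, \mathcal{B}(H))$, which is $G$-injective by Proposition \ref{inj exist}.

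I expect the only mildly delicate point to be the uniform continuity verification in step (ii); the main thing to be careful about is matching the correct orientation of the uniform structure to the choice of $\alpha_{g^{-1}}$ (as opposed to $\alpha_g$) in the definition of $\iota$ — the latter being forced by equivariance with respect to the action $\gamma_g(f)(h) = f(g^{-1}h)$ on $C_{ub}(G, \mathcal{B}(H))$. Everything else is formal.
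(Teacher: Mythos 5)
Your proposal is correct and is essentially identical to the paper's proof: the paper also fixes a faithful representation $\pi:A\to\mathcal{B}(H)$ and uses the map $a\mapsto\big(g\mapsto\pi(\alpha_{g^{-1}}(a))\big)$ into $C_{ub}(G,\mathcal{B}(H))$, which is $G$-injective by Proposition \ref{inj exist}. The paper simply asserts that this is an equivariant embedding, whereas you verify the boundedness, uniform continuity, multiplicativity, injectivity, and equivariance in detail; all of your verifications are accurate.
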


\begin{proof}
Choose a faithful representation $\pi:A\to \mathcal{B}(H)$. Since $\mathcal{B}(H)$ is injective, $B=C_{ub}(G,\mathcal{B}(H))$ is $G$-injective by the result of Proposition \ref{inj exist}.  Then if $\alpha$ denotes the action of $G$ on $A$, the map
$$
\widetilde{\pi}:A\to B,\quad \widetilde{\pi}(a)(g):=\pi(\alpha_{g^{-1}}(a))
$$
is an equivariant embedding.
\end{proof}

Lemma \ref{inj exist} also allows us to give another example of $G$-injective maps; this will be useful later.

\begin{corollary}\label{g inj exist}
Let $(A,\alpha)$ be a $G$-$C^*$-algebra, and fix a (not necessarily equivariant, or faithful) non-degenerate $*$-representation $\pi:A\to \mathcal{B}(H)$.  Let $\widetilde{\pi}:A\to \mathcal{B}(L^2(G,H))$ denote the representation used in the definition of the left regular representation: explicitly, 
$$
(\widetilde{\pi}(a)\xi)(g):=\pi(\alpha_{g^{-1}}(a))\xi(g);
$$
note that $\widetilde{\pi}$ is equivariant for the inner action on $\mathcal{B}(L^2(G,H))$ induced by the amplification of the regular representation.  We let $\mathcal{B}(L^2(G,H))_c$ denote the $G$-continuous part of this algebra.  

Then the $*$-homomorphism $\widetilde{\pi}:A\to \mathcal{B}(L^2(G,H))_c$ is $G$-injective.  In particular, if $\pi$ is injective $\widetilde{\pi}$ will be a $G$-injective embedding.
\end{corollary}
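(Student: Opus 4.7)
The plan is to realise $\widetilde{\pi}$ as the composition of the canonical equivariant embedding $A \to C_{ub}(G,\mathcal{B}(H))$ (used implicitly in Corollary~\ref{g inj emb exist}) with a multiplication-operator embedding $C_{ub}(G,\mathcal{B}(H)) \to \mathcal{B}(L^2(G,H))_c$, and then simply lift through the first factor using Proposition~\ref{inj exist}.

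Concretely, define $\phi: A \to C_{ub}(G,\mathcal{B}(H))$ by $\phi(a)(g) := \pi(\alpha_{g^{-1}}(a))$. The same argument as in Corollary~\ref{g inj emb exist} shows that $\phi$ is a well-defined equivariant $*$-homomorphism, where $C_{ub}(G,\mathcal{B}(H))$ carries the $G$-action of Proposition~\ref{inj exist}. Next, define $M: C_{ub}(G,\mathcal{B}(H)) \to \mathcal{B}(L^2(G,H))$ by sending $f$ to its multiplication operator, $(M(f)\xi)(g) := f(g)\xi(g)$. A direct computation with the amplified regular representation gives
\[
\Ad(1 \otimes \lambda_g)\bigl(M(f)\bigr) = M(\gamma_g f),
\]
so $M$ is equivariant; combined with strong continuity of $\gamma$ on $C_{ub}(G,\mathcal{B}(H))$, this also forces the image of $M$ to lie in the $G$-continuous part $\mathcal{B}(L^2(G,H))_c$. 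A tautological check yields $\widetilde{\pi} = M \circ \phi$.

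With the factorisation in hand, the $G$-injectivity of $\widetilde{\pi}$ is immediate: given an equivariant embedding $\iota: A \to C$, Proposition~\ref{inj exist} (applied to $\phi$) supplies an equivariant ccp extension $\widetilde{\phi}: C \to C_{ub}(G,\mathcal{B}(H))$, and then $M \circ \widetilde{\phi}$ is the desired equivariant ccp extension of $\widetilde{\pi}$ into $\mathcal{B}(L^2(G,H))_c$. If $\pi$ is moreover injective, then $\widetilde{\pi}(a) = 0$ forces $\pi(\alpha_{g^{-1}}(a)) = 0$ for all $g$, and specialising to $g = e$ gives $a = 0$, so $\widetilde{\pi}$ is an embedding.

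The only step that requires genuine care is the equivariance/continuous-part claim for $M$: one must pin down the correct sidedness of the uniform structure on $G$ so that $g \mapsto \gamma_g f$ is actually norm continuous for every $f \in C_{ub}(G,\mathcal{B}(H))$, which is what guarantees that $M$ lands in $\mathcal{B}(L^2(G,H))_c$ rather than merely in $\mathcal{B}(L^2(G,H))$. Once this bookkeeping is done, the proposition is a direct assembly of Proposition~\ref{inj exist} and the factorisation above.
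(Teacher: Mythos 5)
Your argument is correct and is essentially identical to the paper's proof: the paper likewise factors $\widetilde{\pi}$ through $C_{ub}(G,\mathcal{B}(H))$ included by multiplication operators in $\mathcal{B}(L^2(G,H))_c$, and then invokes the (straightforward) fact that a map factoring through a $G$-injective algebra is $G$-injective. You have merely written out the equivariance of the multiplication-operator embedding and the lifting step that the paper leaves to the reader.
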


\begin{proof}
The map $\widetilde{\pi}:A\to \mathcal{B}(L^2(G,H))_c$ factors through $C_{ub}(G,\mathcal{B}(H))$ as in Proposition \ref{inj exist}, when the latter is included by multiplication operators in $\mathcal{B}(L^2(G,H))_c$ in the natural way.  It is straightforward to see that a map that factors through a $G$-injective algebra is $G$-injective.
\end{proof}

\section{The maximal injective crossed product}\label{basic sec}

In this section, $G$ denotes a general locally compact group.

\begin{definition}\label{g inj cp}
Let $A$ be a $G$-$C^*$-algebra.  For each equivariant injective $*$-homomorphism $\iota:A\to B$, define the \emph{$\iota$-norm} on $C_c(G,A)$ by 
$$
\|a\|_\iota:=\|(\iota\rtimes G)(a)\|_{B\rtimes_{\max} G},
$$ 
and let $A\rtimes_\iota G$ denote the corresponding completion.  Define the \emph{injective norm} on $C_c(G,A)$ by 
$$
\|a\|_\inj:=\inf\{\|a\|_{\iota}\mid \iota:A\to B \text{ an equivariant injection}\}
$$
and the \emph{injective crossed product} $A\rtimes_\inj G$ to be the corresponding completion.
\end{definition}

Note that $\|\cdot\|_\inj$ dominates the reduced norm (by injectivity of the latter), so it is a norm on $C_c(G,A)$, not just a seminorm.

In order to study $\rtimes_\inj$, we will make heavy use of the material on $G$-injective maps in the previous section.  

\begin{lemma}\label{realize inj}
Let $\pi:A\to B$ be a $G$-injective embedding.  Then $A\rtimes_{\inj}G$ identifies with the closure of $C_c(G,A)$ under its natural embedding in $B\rtimes_{\max} G$.
\end{lemma}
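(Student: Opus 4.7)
The plan is to establish the two norm inequalities $\|\cdot\|_\pi \leq \|\cdot\|_\inj$ and $\|\cdot\|_\inj \leq \|\cdot\|_\pi$ on $C_c(G,A)$, where $\|\cdot\|_\pi$ denotes the $\iota$-norm from Definition \ref{g inj cp} with $\iota = \pi$. The claim about the identification of completions follows immediately once the norms are shown to agree.

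The inequality $\|\cdot\|_\inj \leq \|\cdot\|_\pi$ is immediate, since $\pi$ is one of the equivariant injections over which the infimum defining $\|\cdot\|_\inj$ is taken. So the real work is the reverse inequality. For this, fix an arbitrary equivariant injection $\iota: A \to C$. Because $\pi: A \to B$ is $G$-injective in the sense of Definition \ref{g inj}, there exists an equivariant ccp map $\widetilde{\pi}: C \to B$ with $\widetilde{\pi} \circ \iota = \pi$. I would then appeal to the standard functoriality of the maximal crossed product under equivariant ccp maps: any equivariant ccp $\varphi: C \to B$ between $G$-algebras induces a ccp map $\varphi \rtimes_\max G : C \rtimes_\max G \to B \rtimes_\max G$ agreeing with $\varphi$ on the level of $C_c(G,-)$. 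Applying this to $\widetilde{\pi}$ and using $\widetilde{\pi} \circ \iota = \pi$, for any $a \in C_c(G,A)$ we get
\[
\|a\|_\pi = \|(\pi \rtimes G)(a)\|_{B \rtimes_\max G} = \|(\widetilde{\pi} \rtimes G)((\iota \rtimes G)(a))\|_{B \rtimes_\max G} \leq \|(\iota \rtimes G)(a)\|_{C \rtimes_\max G} = \|a\|_\iota,
\]
where the inequality uses contractivity of $\widetilde{\pi} \rtimes G$. Taking the infimum over $\iota$ yields $\|a\|_\pi \leq \|a\|_\inj$.

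The only step that merits care is invoking the functoriality of $\rtimes_\max$ under equivariant ccp maps; this is well known (a ccp map is a compression of a $*$-homomorphism via Stinespring, which is compatible with forming the covariance algebra), but I would cite a reference such as \cite{Brown:2008qy} or verify it via the universal property of $C \rtimes_\max G$ applied to the composition of $\widetilde{\pi}$ with a covariant representation of $B$. Everything else is formal from the definitions, and the conclusion about completions follows since $C_c(G,A)$ sits densely in both $A \rtimes_\inj G$ and in the closure of its image inside $B \rtimes_\max G$, and the two norms restricted to $C_c(G,A)$ now coincide.
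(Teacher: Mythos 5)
Your proposal is correct and follows essentially the same route as the paper: both use $G$-injectivity of $\pi$ to produce an equivariant ccp map $C\to B$ splitting an arbitrary equivariant embedding $\iota\colon A\to C$, then apply functoriality of $\rtimes_\max$ for equivariant ccp maps and contractivity of the induced map to get $\|a\|_\pi\leq\|a\|_\iota$. The paper cites \cite{Buss:2014aa}*{Theorem 4.9} for the ccp functoriality step, which is the precise equivariant statement you should reference rather than the non-equivariant Stinespring argument.
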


\begin{proof}
Let $\phi:A\to C$ be any $G$-equivariant embedding.  We need to show that $\|a\|_\pi\leq \|a\|_\phi$ for any $a\in C_c(G,A)$.  As $\pi$ is $G$-injective, there exists a ccp equivariant map $\psi:C\to B$ making the diagram 
$$
\xymatrix{ C \ar[dr]^-\psi & \\ A \ar[u]^-\phi \ar[r]^-\pi & B }
$$
commute.  The maximal crossed product is functorial for ccp maps, as follows for example from \cite{Buss:2014aa}*{Theorem 4.9}.  Taking maximal crossed products we thus get a commutative diagram
$$
\xymatrix{ C\rtimes_{\max}G \ar[dr]^-{\psi\rtimes G} & \\ A\rtimes_{\max} G \ar[u]^-{\phi\rtimes G} \ar[r]^-{\pi\rtimes G} & B\rtimes_{\max}G },
$$
where $\phi\rtimes G$ and $\pi\rtimes G$ are $*$-homomorphisms, and $\psi\rtimes G$ is ccp.  It follows that for any $a\in C_c(G,A)$,
$$
\|a\|_\pi=\|(\pi\rtimes G)(a)\|=\|(\psi\rtimes G)\circ (\phi\rtimes G)(a)\|\leq \|(\phi\rtimes G)(a)\|=\|a\|_\phi
$$
as required.
\end{proof}

From the above lemma, we get the following immediate corollary.

\begin{corollary}\label{cor max}
Suppose that $A$ is $G$-injective. Then $A\rtimes_{\inj}G=A\rtimes_\max G$. \qed
\end{corollary}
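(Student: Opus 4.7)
The plan is to apply Lemma \ref{realize inj} to the identity embedding $\id : A \to A$. For this we first need to verify that $\id : A \to A$ is a $G$-injective map in the sense of Definition \ref{g inj}; but this is immediate from the definition of a $G$-injective $C^*$-algebra, since $\id$ is an equivariant ccp (in fact $*$-homomorphic) map into the $G$-injective algebra $A$, and every such map is $G$-injective by hypothesis.

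Having established that $\id : A \to A$ qualifies as a $G$-injective embedding, Lemma \ref{realize inj} tells us that $A \rtimes_\inj G$ identifies with the closure of $C_c(G, A)$ under its natural inclusion in $A \rtimes_\max G$. But this closure is, by definition, all of $A \rtimes_\max G$. Hence $A \rtimes_\inj G = A \rtimes_\max G$.

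There is no real obstacle here; the entire content of the corollary has been packaged into Lemma \ref{realize inj}, and the only minor point to check is the unravelling of definitions ensuring that ``$A$ is $G$-injective'' implies the identity map is itself a $G$-injective embedding in the sense required by that lemma. The proof is essentially a one-liner.
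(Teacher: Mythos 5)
Your proof is correct and is exactly the argument the paper intends: the corollary is stated as an immediate consequence of Lemma \ref{realize inj}, obtained by applying that lemma to the identity embedding $\id\colon A\to A$, which is a $G$-injective embedding precisely because $A$ is a $G$-injective algebra. The closure of $C_c(G,A)$ in $A\rtimes_{\max}G$ is all of $A\rtimes_{\max}G$, so there is nothing further to add.
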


\begin{proposition}\label{inj func}
The injective crossed product defines a crossed product functor.
\end{proposition}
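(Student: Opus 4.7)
The plan is to verify the two requirements for $\rtimes_\inj$ to be a crossed product functor: first, that $\|\cdot\|_\inj$ is a $C^*$-norm on $C_c(G,A)$ satisfying $\|\cdot\|_r \leq \|\cdot\|_\inj \leq \|\cdot\|_\max$; and second, that an equivariant $*$-homomorphism $\phi:A\to C$ induces a $*$-homomorphism $\phi\rtimes_\inj G: A\rtimes_\inj G\to C\rtimes_\inj G$ agreeing with $\phi_*$ on $C_c(G,A)$.

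For the norm bounds, note that taking $\iota=\id_A:A\to A$ shows $\|a\|_\inj \leq \|a\|_\max$. For the lower bound, given any equivariant injection $\iota:A\to B$, the induced map $A\rtimes_r G\to B\rtimes_r G$ is injective, and $B\rtimes_r G$ is a quotient of $B\rtimes_\max G$; composing gives $\|a\|_r \leq \|a\|_{B\rtimes_r G}\leq \|a\|_{B\rtimes_\max G}=\|a\|_\iota$, so $\|a\|_r \leq \|a\|_\inj$. In particular, as noted already, $\|\cdot\|_\inj$ is a genuine norm, and then $A\rtimes_\inj G$ is by construction a $C^*$-completion.

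The substantive step is functoriality. My plan is to exploit Lemma~\ref{realize inj} via $G$-injective embeddings on both sides. Given an equivariant $*$-homomorphism $\phi:A\to C$, use Corollary~\ref{g inj emb exist} to choose $G$-injective embeddings $\pi_A:A\hookrightarrow B_A$ and $\pi_C:C\hookrightarrow B_C$. The composite $\pi_C\circ\phi:A\to B_C$ is an equivariant ccp map; by $G$-injectivity of $B_C$ applied to the embedding $\pi_A$, it extends to an equivariant ccp map $\psi:B_A\to B_C$ with $\psi\circ\pi_A=\pi_C\circ\phi$. Using functoriality of $\rtimes_\max$ for equivariant ccp maps (\cite{Buss:2014aa}*{Theorem~4.9}), we obtain a ccp map $\psi\rtimes_\max G:B_A\rtimes_\max G\to B_C\rtimes_\max G$. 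Since $\psi$ extends $\pi_C\circ\phi$, this ccp map sends $C_c(G,A)\subseteq B_A\rtimes_\max G$ into $C_c(G,C)\subseteq B_C\rtimes_\max G$ by the formula $a\mapsto\phi_*(a)$.

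Combining this with Lemma~\ref{realize inj}, which identifies $A\rtimes_\inj G$ and $C\rtimes_\inj G$ with the closures of $C_c(G,A)$ and $C_c(G,C)$ inside $B_A\rtimes_\max G$ and $B_C\rtimes_\max G$ respectively, gives for each $a\in C_c(G,A)$
$$
\|\phi_*(a)\|_{C\rtimes_\inj G}=\|\psi\rtimes_\max G(a)\|_{B_C\rtimes_\max G}\leq \|a\|_{B_A\rtimes_\max G}=\|a\|_{A\rtimes_\inj G}.
$$
Hence $\phi_*$ extends by continuity to a linear contraction $A\rtimes_\inj G\to C\rtimes_\inj G$, and because it is a $*$-homomorphism on the dense subalgebra $C_c(G,A)$, the extension is a $*$-homomorphism; one also checks functoriality in $\phi$ directly on $C_c(G,A)$. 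The only mild subtlety I foresee is verifying that the choice of $G$-injective hulls is immaterial (the extension depends only on its values on $C_c(G,A)$, so this is automatic), and that identity morphisms induce identities, which is immediate from the construction.
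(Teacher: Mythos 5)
Your proposal is correct and follows essentially the same route as the paper: choose $G$-injective embeddings $\pi_A$, $\pi_C$ via Corollary~\ref{g inj emb exist}, extend $\pi_C\circ\phi$ along $\pi_A$ to an equivariant ccp map $\psi$, apply functoriality of $\rtimes_\max$ for ccp maps, and use Lemma~\ref{realize inj} to identify the injective crossed products with closures inside the maximal crossed products of the $G$-injective algebras. The only difference is that you additionally spell out the norm inequalities $\|\cdot\|_r\leq\|\cdot\|_\inj\leq\|\cdot\|_\max$, which the paper disposes of in the remark following Definition~\ref{g inj cp}.
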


\begin{proof}
Let $\phi:A\to C$ be an arbitrary $*$-homomorphism.  Let $\pi_A:A\to B_A$ and $\pi_C:C\to B_C$ be $G$-injective $*$-homomorphisms as in Corollary \ref{g inj emb exist}.  Now, in the diagram
$$
\xymatrix{ B_A \ar@{-->}[r] & B_C \\ \ar[u]^-{\pi_A} A \ar[r]^-\phi & C \ar[u]^-{\pi_C} }
$$
the definition of injectivity for $B_C$ applied to the inclusion $A\to B_A$ and to the composition $\pi_C\circ \phi:A\to B_C$ allows us to fill in the dashed arrow with an equivariant ccp map, say $\widetilde{\phi}$.  Applying Lemma \ref{realize inj} and using functoriality of the ccp maps, we thus get a diagram 
$$
\xymatrix{ B_A\rtimes_{\max} G \ar[r]^{\widetilde{\phi} \rtimes G} & B_C\rtimes_{\max} G \\ A\rtimes_\inj G \ar[u]^-{\pi_A\rtimes G}  & C\rtimes_\inj G \ar[u]^{\pi_C\rtimes G}}
$$
where the vertical maps are injections.  Identifying $A\rtimes_\inj G$ and $C\rtimes_\inj G$ with their images under the vertical maps, the restriction of $\widetilde{\phi}\rtimes G$ to $A\rtimes_\inj G$ is the map required by functoriality.
\end{proof}

\begin{proposition}\label{inj inj}
The functor $\rtimes_\inj$ is the maximal injective crossed product functor.
\end{proposition}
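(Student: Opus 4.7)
The plan is to verify the two statements contained in ``maximal'': first that $\rtimes_\inj$ is itself an injective crossed product functor, and second that $\|\cdot\|_\mu\leq\|\cdot\|_\inj$ on $C_c(G,A)$ for every injective crossed product functor $\rtimes_\mu$ and every $G$-$\Cst$-algebra $A$. Once functoriality of $\rtimes_\inj$ has been established (Proposition \ref{inj func}), both parts should follow essentially formally from the definition of $\|\cdot\|_\inj$ as an infimum over equivariant injections; in particular I do not expect to need any further use of $G$-injective algebras or Lemma \ref{realize inj}.

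For injectivity, I would fix an equivariant injection $\iota:A\to C$ and show the stronger statement that $\iota\rtimes_\inj G$ is isometric on $C_c(G,A)$. The key observation is that composing $\iota$ with any further equivariant injection $\kappa:C\to D$ yields an equivariant injection $\kappa\circ\iota:A\to D$. Functoriality of $\rtimes_\max$ then gives
\[
\|(\iota\rtimes G)(a)\|_\kappa = \|((\kappa\circ\iota)\rtimes G)(a)\|_\max = \|a\|_{\kappa\circ\iota},
\]
and infimising over $\kappa$ produces $\|(\iota\rtimes G)(a)\|_\inj \geq \|a\|_\inj$, because the infimum of the $\|a\|_{\kappa\circ\iota}$ runs over a subfamily of all equivariant injections out of $A$ and therefore dominates the full infimum. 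The reverse inequality is already functoriality of $\rtimes_\inj$, so the two norms agree and $\iota\rtimes_\inj G$ extends to an isometric embedding.

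For maximality, let $\rtimes_\mu$ be an injective crossed product functor and let $\iota:A\to B$ be any equivariant injection. Injectivity of $\rtimes_\mu$ makes $\iota\rtimes_\mu G$ an isometric embedding of $A\rtimes_\mu G$ into $B\rtimes_\mu G$, so for $a\in C_c(G,A)$,
\[
\|a\|_\mu = \|(\iota\rtimes G)(a)\|_{B\rtimes_\mu G} \leq \|(\iota\rtimes G)(a)\|_{B\rtimes_\max G} = \|a\|_\iota,
\]
where the middle inequality uses the defining bound $\|\cdot\|_\mu\leq\|\cdot\|_\max$ on $C_c(G,B)$. Taking the infimum over all such $\iota$ yields $\|a\|_\mu\leq\|a\|_\inj$.

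I do not anticipate any real obstacle once functoriality of $\rtimes_\inj$ is in hand; both parts amount to unwinding the infimum defining $\|\cdot\|_\inj$ and using that the composition of two equivariant injections is again an equivariant injection. The single point that needs to be kept straight is the direction of the infimum argument in the injectivity step: one is bounding $\|(\iota\rtimes G)(a)\|_\inj$ \emph{below} by $\|a\|_\inj$ by restricting the family of competing injections, not the other way round.
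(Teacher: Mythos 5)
Your proof is correct, but it takes a somewhat different route from the paper's, most visibly in the injectivity step. The paper proves injectivity of $\rtimes_\inj$ by choosing $G$-injective embeddings $\pi_A:A\to B_A$ and $\pi_C:C\to B_C$, using $G$-injectivity of $\pi_A$ to produce an equivariant ccp map $\psi:B_C\to B_A$ with $\psi\circ\pi_C\circ\phi=\pi_A$, and then observing that $(\psi\rtimes G)\circ(\pi_C\rtimes G)\circ(\phi\rtimes G)$ agrees with the injection $\pi_A\rtimes G$ furnished by Lemma \ref{realize inj}, which forces $\phi\rtimes G$ to be injective. You instead establish the (equivalent, but more directly stated) fact that $\iota\rtimes_\inj G$ is isometric by a purely formal computation: $\|(\iota\rtimes G)(a)\|_\inj=\inf_\kappa\|a\|_{\kappa\circ\iota}\geq\|a\|_\inj$ because the compositions $\kappa\circ\iota$ form a subfamily of the injections competing in the infimum defining $\|a\|_\inj$, while the reverse inequality is just contractivity of the $*$-homomorphism supplied by Proposition \ref{inj func}. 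Your direction-of-infimum remark is exactly right, and the argument is sound; the genuinely non-formal input (existence of ccp extensions along $G$-injective embeddings, i.e.\ Corollary \ref{g inj emb exist} and Lemma \ref{realize inj}) has not disappeared but is absorbed into your appeal to functoriality. For maximality the two arguments are essentially the same idea at different levels: the paper factors $A\rtimes_\inj G\to B_A\rtimes_\max G\to B_A\rtimes_\mu G$ through a $G$-injective embedding and uses injectivity of $\rtimes_\mu$ to identify the image with $A\rtimes_\mu G$, whereas you run the comparison $\|a\|_\mu=\|(\iota\rtimes G)(a)\|_{B\rtimes_\mu G}\leq\|a\|_\iota$ for an arbitrary equivariant injection $\iota$ and then infimise; your version has the minor advantage of not needing the embedding to be $G$-injective at this stage.
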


\begin{proof}
We must first show that $\rtimes_\inj$ is injective.  Let then $\phi:A\to C$ be an injective equivariant $*$-homomorphism.  Let $\pi_A:A\to B_A$ and $\pi_C:C\to B_C$ be $G$-injective embeddings as in Corollary \ref{g inj emb exist}, and consider the diagram
$$
\xymatrix{ B_A & B_C \ar@{-->}[l] \\ A\ar[u]^{\pi_A} \ar[r]_-\phi & C \ar[u]_-{\pi_C} }.
$$
The composition $\pi_C\circ \phi:A\to B_C$ is injective, so as the inclusion $A\to B_A$ is $G$-injective the dashed arrow can be filled in with an equivariant ccp map, say $\psi$.  Taking crossed products gives a diagram
$$
\xymatrix{ B_A\rtimes_{\max} G & B_C\rtimes_{\max} G \ar@{-->}[l]_-{\psi\rtimes G} \\ A\rtimes_\inj G\ar[u]^{\pi_A\rtimes G} \ar[r]_-{\phi\rtimes G} & C\rtimes_\inj G \ar[u]_-{\pi_C\rtimes G} }
$$
where the vertical maps are injective by Lemma \ref{realize inj}.  The composition $(\psi\rtimes G)\circ (\pi_C\rtimes G)\circ (\phi\rtimes G)$ is thus injective as it agrees with the left hand vertical map; hence $\phi\rtimes G$ is injective as required.

To see that $\rtimes_\inj$ is the maximal injective crossed product, let $\rtimes_\mu$ be any other injective crossed product, and let $A\to B_A$ be a $G$-injective embedding.  Then we look at the diagram:
$$
\xymatrix{ A\rtimes_\inj G \ar[r] & B_A\rtimes_{\max} G \ar[d] \\ A\rtimes_\mu G \ar[r] & B_A\rtimes_\mu G }.
$$
The composition $A\rtimes_\inj G \to B_A\rtimes_\mu G$ has image isomorphic to $A\rtimes_\mu G$ as $\rtimes_\mu$ is injective, and is the identity on $C_c(G,A)$; this yields a homomorphism $A\rtimes_\inj G\onto A\rtimes_\mu G$ extending the identity on $C_c(G,A)$, completing the proof.
\end{proof}

\begin{remark}\label{rem-corr} Recall from \cite{Buss:2014aa} that  a crossed product 
 functor $\rtimes_\mu$ for $G$ is called a {\em correspondence functor} if it is functorial
 for $G$-equivariant correspondences in the sense that if $E$ is a $G$-equivariant correspondence from $A$ to $B$, then there is a canonical  construction 
 of a crossed product correspondence $E\rtimes_\mu G$  from $A\rtimes_\mu G$ to $B\rtimes_\mu G$.  It has been shown in \cites{Buss:2014aa, Buss:2015ty} 
 that correspondence functors enjoy many nice properties. For example, they admit dual coactions and a descent in Kasparov's $G$-equivariant bivariant $K$-theory (see \cite{Buss:2014aa}*{Sections 5 and 6}).
Moreover, \cite{Buss:2014aa}*{Theorem 4.9} shows (among other things) that for a given crossed product functor $\rtimes_\mu$
the following are equivalent:
\begin{enumerate}
\item $\rtimes_\mu$ is a correspondence functor.
\item $\rtimes_\mu$ is injective on $G$-invariant hereditary subalgebras in the sense that if $B\subseteq A$ is $G$-invariant hereditary subalgebra of $A$, then 
$B\rtimes_\mu G$ injects into $A\rtimes_\mu G$.
\item $\rtimes_\mu$ is functorial for ccp maps.
\end{enumerate}
Hence the following corollary is immediate from the fact that $\rtimes_\inj$ is injective.
\end{remark}

 \begin{corollary}\label{cor cor}
 The injective functor $\rtimes_\mu$ is a correspondence functor. \qed
 \end{corollary}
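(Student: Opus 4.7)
The plan is to deduce this directly from the equivalence recalled in Remark \ref{rem-corr}, which comes from \cite{Buss:2014aa}*{Theorem 4.9}. Among the three equivalent conditions listed there, the cleanest to verify is condition (2): that for any $G$-invariant hereditary subalgebra $B \subseteq A$, the induced map $B \rtimes_\inj G \to A \rtimes_\inj G$ is injective.

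The verification is essentially a one-line observation. A $G$-invariant hereditary subalgebra $B$ of $A$ sits inside $A$ via an equivariant injective $*$-homomorphism $B \hookrightarrow A$, so Proposition \ref{inj inj}, which asserts that $\rtimes_\inj$ is an injective crossed product functor, immediately gives that the induced map $B\rtimes_\inj G \to A\rtimes_\inj G$ is injective. This verifies condition (2), and then the equivalence (2)$\Leftrightarrow$(1) of Remark \ref{rem-corr} yields the conclusion.

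There is really no obstacle here beyond citing the right results in the right order: Proposition \ref{inj inj} delivers injectivity on the nose, and the heavy lifting for the equivalence between injectivity on hereditary subalgebras and being a correspondence functor is done in \cite{Buss:2014aa}*{Theorem 4.9}. One could alternatively verify condition (3) — functoriality for ccp maps — using essentially the same argument as in the proof of Proposition \ref{inj func}: given an equivariant ccp map $\phi:A\to C$, choose $G$-injective embeddings $\pi_A:A\to B_A$ and $\pi_C:C\to B_C$, lift $\pi_C\circ \phi$ through $\pi_A$ using $G$-injectivity of $B_C$ to obtain an equivariant ccp extension $\widetilde{\phi}:B_A\to B_C$, apply functoriality of $\rtimes_\max$ for ccp maps to get $\widetilde{\phi}\rtimes G$, and restrict to $A\rtimes_\inj G$ using Lemma \ref{realize inj}. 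But since Proposition \ref{inj inj} has already been proved, appealing to (2) is the most economical route.
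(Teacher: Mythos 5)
Your proof is correct and matches the paper's own reasoning exactly: the paper also deduces the corollary from the equivalence in Remark \ref{rem-corr} by noting that condition (2) is a special case of the injectivity of $\rtimes_\inj$ established in Proposition \ref{inj inj}. Nothing further is needed.
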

 
 Among the many nice implications of being a correspondence functor, we mention the following which we shall use 
 in Section \ref{sec amenable} below. It follows  directly from Corollary \ref{cor cor} and \cite{Buss:2014aa}*{Theorem 5.6}.

\begin{proposition}\label{prop:inj-duality-functor}
Let $G$ be a locally compact group and let $(A,\alpha)$ be a $G$-algebra. 
Then the crossed product functor $\rtimes_\inj$ is a \emph{duality functor} in the sense that the canonical representation
$$(A,G)\to \M(A\rtimes_\inj G\otimes C^*_\max(G))$$
sending $a\mapsto a\otimes 1$ and $g\mapsto \delta_g\otimes \delta_g$,
extends to an (injective) homomorphism (called the \emph{dual coaction})
$$\dual\alpha_\inj\colon A\rtimes_\inj G\into\M(A\rtimes_\inj G\otimes C^*_\max(G)).$$
\end{proposition}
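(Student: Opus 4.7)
The plan is to obtain both the existence and the injectivity of the dual coaction from the general machinery of \cite{Buss:2014aa}, using the already-established fact (Corollary \ref{cor cor}) that $\rtimes_\inj$ is a correspondence functor. By Remark \ref{rem-corr}, this is equivalent to $\rtimes_\inj$ being functorial for $G$-equivariant ccp maps, and is the structural input required by the construction of dual coactions in \cite{Buss:2014aa}*{Section 5}. Thus \cite{Buss:2014aa}*{Theorem 5.6}, applied to $(A,\alpha)$, directly produces a $*$-homomorphism
$$
\dual\alpha_\inj\colon A\rtimes_\inj G\to \M\bigl(A\rtimes_\inj G\otimes C^*_\max(G)\bigr)
$$
whose restriction to the canonical generators is precisely the covariant pair $a\mapsto a\otimes 1$, $g\mapsto \delta_g\otimes\delta_g$. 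This gives the existence statement with essentially no further work.

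For injectivity, I would use the standard slice-by-the-trivial-representation argument. The trivial representation $1_G\colon C^*_\max(G)\to\C$ is a $*$-homomorphism, so, since the tensor product on the right is the maximal one, it induces a $*$-homomorphism
$$
E:=\id_{A\rtimes_\inj G}\otimes 1_G\colon (A\rtimes_\inj G)\otimes C^*_\max(G)\to A\rtimes_\inj G,
$$
which extends strictly to multiplier algebras. A one-line computation on the dense $*$-subalgebra $C_c(G,A)\subseteq A\rtimes_\inj G$ shows that $E\circ \dual\alpha_\inj$ is the identity on $C_c(G,A)$, hence on all of $A\rtimes_\inj G$ by continuity. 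Thus $\dual\alpha_\inj$ admits a left inverse and is therefore injective.

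I expect no serious obstacles: the only things to verify carefully are that the image of the covariant pair really lies in the multiplier algebra of $(A\rtimes_\inj G)\otimes C^*_\max(G)$, and that the slice map $\id\otimes 1_G$ is well-defined on the maximal tensor product. Both are routine, and both rely only on the fact that one tensor factor is $C^*_\max(G)$ (so that maximality is automatic against any $C^*$-algebra). The substantive content — that the functor behaves well enough for a dual coaction to exist — has already been done in Corollary \ref{cor cor}.
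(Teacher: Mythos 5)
Your proposal matches the paper's proof, which likewise derives the result directly from Corollary \ref{cor cor} together with \cite{Buss:2014aa}*{Theorem 5.6}; your explicit slice-map argument for injectivity is the standard one and is essentially what that citation provides. One minor caution: in this paper's conventions the unadorned $\otimes$ in the statement is the minimal tensor product rather than the maximal one, but the slice $\id\otimes 1_G$ is still well defined there (the minimal tensor product is functorial for $*$-homomorphisms), so your argument goes through unchanged.
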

\medskip
 \noindent
{\bf The $G$-WEP.} We saw in Corollary \ref{cor max} that  for $G$-injective algebras $A$, the injective crossed product by $A$ coincides with the 
maximal crossed product by $A$. We shall now introduce a larger class of $G$-algebras, which enjoy the same property.
Recall that a \cstar{}algebra $A$ has Lance's {\em weak expectation property} (WEP) if every embedding $A\into B$ into another \cstar{}algebra $B$
admits a weak conditional expectation, that is, a ccp map $p\colon B\to A^{**}$ which restricts to the identity on $A$ (see \cite{Brown:2008qy}*{{Definition~3.6.7}}). We  now introduce a $G$-equivariant version of this property:

\begin{definition}
Let $G$ be a locally compact group.
We say that a $G$-algebra $A$ has the {\em $G$-equivariant weak expectation property} ($G$-WEP) if for every $G$-equivariant embedding $\iota\colon A\into B$ into some other $G$-algebra $B$,
there is an equivariant ccp map $p\colon B\to A^{**}$ whose composition with $\iota$ coincides with the canonical inclusion $A\into A^{**}$.
\end{definition}

Here we consider $A^{**}$ endowed with the double dual action $\alpha^{**}\colon G\to \Aut(A^{**})$ of the given action  $\alpha:G\to\Aut(A)$.
Let $A_c^{**}$ denote the subalgebra of $A^{**}$ consisting of all $G$-continuous elements of $A^{**}$. Then for any $G$-algebra $B$ the image 
of any norm decreasing $G$-equivariant map $B\to A^{**}$ lies in $A^{**}_c$. In particular, this applies to the ccp map $p\colon B\to A^{**}$  in the above definition.

\begin{proposition}
Let $G$ be a locally compact group. Then:
\begin{enumerate}
\item Every $G$-injective $G$-algebra has the $G$-WEP.
\item If $A$ is a $G$-algebra such that there exists a $G$-invariant \cstar{}subalgebra $C\sbe A^{**}_c$ which is $G$-injective and contains $A$, then $A$ has the $G$-WEP.
\end{enumerate}
\end{proposition}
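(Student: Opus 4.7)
My plan is that both statements are more or less direct unwindings of the definition of $G$-injectivity once one chooses the right map to extend, and that (1) is essentially the special case of (2) where $C=A$ (viewed inside $A^{**}$ via the canonical inclusion).

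For part (1), given a $G$-injective $G$-algebra $A$ and an equivariant embedding $\iota\colon A\hookrightarrow B$, I would apply the defining property of $G$-injectivity to the equivariant ccp map $\id_A\colon A\to A$ along $\iota$. This produces an equivariant ccp extension $q\colon B\to A$ with $q\circ\iota=\id_A$. Composing with the canonical inclusion $A\hookrightarrow A^{**}$ yields the desired equivariant ccp map $p\colon B\to A^{**}$ satisfying $p\circ\iota$ equals the canonical inclusion of $A$ into $A^{**}$.

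For part (2), given an equivariant embedding $\iota\colon A\hookrightarrow B$ together with the assumed $G$-injective $G$-invariant $C^*$-subalgebra $C\sbe A^{**}_c$ containing $A$, I would consider the inclusion $j\colon A\hookrightarrow C$ (which is an equivariant $*$-homomorphism, hence ccp). Since $C$ is $G$-injective, this map is $G$-injective in the sense of Definition~\ref{g inj}, so it extends along $\iota$ to an equivariant ccp map $\widetilde{j}\colon B\to C$ with $\widetilde{j}\circ\iota=j$. Composing with the inclusion $C\hookrightarrow A^{**}$ produces an equivariant ccp map $p\colon B\to A^{**}$ whose composition with $\iota$ is the canonical inclusion $A\hookrightarrow A^{**}$, as required. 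Specializing to $C=A$ (which lies in $A^{**}_c$ since its $G$-action is strongly continuous) recovers part (1).

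There is no real obstacle here beyond pointing at the correct diagram; the only mild subtlety is checking that the constructed $p$ indeed lands where required, but by the remark just before the proposition any equivariant norm-decreasing map into $A^{**}$ automatically takes values in $A^{**}_c$, so no further care is needed.
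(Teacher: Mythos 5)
Your proof is correct and follows essentially the same route as the paper: the authors likewise reduce (1) to (2) (with $C=A$) and obtain the weak expectation by applying $G$-injectivity of $C$ to the inclusion $A\hookrightarrow C$ along $\iota$, then composing with $C\subseteq A^{**}$.
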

\begin{proof} Of course, it suffices to show (2). So assume that $A\sbe C\sbe A^{**}_c$ are as in (2).
Let $\iota\colon A\into B$ be a $G$-equivariant embedding into a $G$-algebra $B$.
Then the $G$-injectivity of $C$ applied to the inclusion $i\colon A\into C$ (which is the co-restriction of the canonical embedding $A\into A^{**}$) implies the existence of a $G$-equivariant ccp map $p\colon B\to C\sbe A^{**}$ with $p\circ \iota =i$.
\end{proof}

\begin{example}
Let $B=\mathcal {B}(H)$ be the \cstar{}algebra of bounded operators on some Hilbert space $H$ endowed with the trivial $G$-action.
We know that the $G$-algebra $\contub(G,B)$ is $G$-injective (with respect to the translation $G$-action). 
Since this is canonically embedded into the double dual of the $G$-algebra $\contz(G,\K)$, where $\K:=\K(H)$, 
it follows from (2) in the above proposition that any $G$-algebra $A$ lying between $\contz(G,\K)$ and $\contub(G,B)$ has the $G$-WEP.
\end{example}

\begin{proposition}\label{prop-GWEP}
If $A$ is a $G$-algebra with the $G$-WEP, then $A\rtimes_\inj G=A\rtimes_\max G$.
\end{proposition}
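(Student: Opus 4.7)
The plan is to exhibit $A\rtimes_\inj G$ as the closure of $C_c(G,A)$ inside $B\rtimes_\max G$ for a $G$-injective embedding $A\into B$, and then use the $G$-WEP to factor the canonical map $A\rtimes_\max G\to A^{**}_c\rtimes_\max G$ through $B\rtimes_\max G$. The theorem will follow once we check that this latter canonical map is already isometric on $C_c(G,A)$, which is the standard bidual trick for covariant representations.

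More precisely, I would choose a $G$-injective embedding $\iota\colon A\into B$ via Corollary~\ref{g inj emb exist}, so that by Lemma~\ref{realize inj} we have $\|a\|_\inj=\|(\iota\rtimes G)(a)\|_{B\rtimes_\max G}$ for all $a\in C_c(G,A)$. Since $\|a\|_\max\ge\|a\|_\inj$ always, only the reverse inequality requires work. The $G$-WEP then supplies an equivariant ccp map $p\colon B\to A^{**}_c$ with $p\circ\iota=i_A$, the canonical inclusion. Applying $\rtimes_\max$, which is functorial for equivariant ccp maps by \cite{Buss:2014aa}*{Theorem~4.9}, yields a factorisation
$$A\rtimes_\max G\xrightarrow{\iota\rtimes G}B\rtimes_\max G\xrightarrow{p\rtimes G}A^{**}_c\rtimes_\max G$$
whose composition equals $i_A\rtimes G$. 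Since $p\rtimes G$ is contractive, the desired reverse inequality reduces to showing that $i_A\rtimes G$ is isometric on $C_c(G,A)$.

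For this last isometry statement, I would run the usual bidual argument: given any covariant pair $(\pi,u)$ of $(A,G)$ on a Hilbert space $H$, let $\pi^{**}\colon A^{**}\to \mathcal B(H)$ be the unique normal extension of $\pi$. Then $x\mapsto \pi^{**}(\alpha_g^{**}(x))$ and $x\mapsto u_g\pi^{**}(x)u_g^*$ are both $\sigma$-weakly continuous in $x$ and agree on $A$, hence agree on all of $A^{**}$; restricting to $A^{**}_c$ produces a covariant pair $(\pi^{**}|_{A^{**}_c},u)$ of $(A^{**}_c,G)$ whose integrated form coincides with $\pi\rtimes u$ on $C_c(G,A)$. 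This yields $\|a\|_{A\rtimes_\max G}\le \|(i_A\rtimes G)(a)\|_{A^{**}_c\rtimes_\max G}$, and the opposite inequality is automatic since $i_A\rtimes G$ is a $*$-homomorphism.

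The main technical point is thus the covariance of the bidual extension for the double dual action, which is a routine normality argument but requires a little care in passing to the $G$-continuous subalgebra $A^{**}_c$ (where the bidual action is again strongly continuous, so a genuine covariant representation for the max crossed product is produced). Everything else is clean assembly: $G$-WEP, Lemma~\ref{realize inj}, and ccp-functoriality of $\rtimes_\max$.
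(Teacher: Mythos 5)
Your proposal is correct and follows essentially the same route as the paper: use the $G$-WEP map $p\colon B\to A^{**}_c$, apply ccp-functoriality of $\rtimes_\max$ to factor $i_A\rtimes G$ through $B\rtimes_\max G$, and verify that $A\rtimes_\max G$ embeds into $A^{**}_c\rtimes_\max G$ by the bidual/normal-extension argument. The only cosmetic difference is that the paper packages your isometry step as a single canonical homomorphism $A^{**}_c\rtimes_\max G\to (A\rtimes_\max G)^{**}$ composing to the bidual embedding, rather than quantifying over all covariant pairs.
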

\begin{proof}
Given a $G$-equivariant embedding $\iota\colon A\into B$, there is a $G$-equivariant ccp map $p\colon B\to A^{**}_{c}$ with $p\circ\iota(a)=a$ for all $a\in A$.
Notice that $A\rtimes_\max G$ embeds into $A^{**}_c\rtimes_\max G$. Indeed, by the universal property of $\rtimes_{\max}$, we have a canonical homomorphism 
$A^{**}_c\rtimes_\max G\to (A\rtimes_\max G)^{**}$ whose compostion
$$A\rtimes_\max G\to A^{**}_c\rtimes_\max G\to (A\rtimes_\max G)^{**}$$
is the canonical bidual embbeding. We can therefore identify $A\rtimes_\max G\sbe A^{**}_c\rtimes_\max G$. 
Now, by functoriality of the maximal crossed product for ccp maps, $p$ induces a ccp map $p\rtimes_\max G\colon B\rtimes_\max G\to A^{**}_c\rtimes_\max G$ satisfying $(p\rtimes_\max G)\circ(\iota\rtimes_\max G)(x)=x$ for all $x\in A\rtimes_\max G$ so that $\iota\rtimes_\max G\colon A\rtimes_\max G\to B\rtimes_\max G$ is injective. Since $B$ was arbitrary the result now follows from the definition of the injective crossed product.
\end{proof}

\begin{remark}
Although above we identified a class of $G$-algebras such that $A\rtimes_\inj G=A\rtimes_\max G$, we shall see later that 
for a locally compact group $G$ the maximal injective functor $\rtimes_\inj$ coincides with $\rtimes_\max$ if and only if 
$G$ is amenable. Indeed, we show in Proposition \ref{group alg} below that the corresponding group algebras coincide
iff $G$ is amenable.
\end{remark}

 \begin{remark}\label{gen rem}
The only property of the maximal crossed product functor used in our constructions for the maximal injective crossed product $\rtimes_\inj$ is its functoriality for $G$-equivariant ccp maps.
  Therefore, the constructions of this section could be carried out without change starting with an arbitrary correspondence functor $\rtimes_\mu$ in place of the maximal crossed product functor $\rtimes_{\max}$.
Everything goes through as before, and the resulting crossed product functor, say $\rtimes_{\inj(\mu)}$ is the largest injective crossed product functor that is dominated by $\rtimes_\mu$. Moreover, for any $G$-injective, algebra $A$ we then have $A\rtimes_{\inj(\mu)}G\cong A\rtimes_\mu G$.
An analoguous statement is not clear for algebras $A$ with the $G$-WEP, since the proof of Proposition \ref{prop-GWEP} uses the 
universality of the maximal crossed product. However, the  proof goes through if we start with an {\em exact} correspondence functor $\rtimes_\mu$ by making use of \cite{Buss:2018pw}*{Theorem 3.5}.
 \end{remark}

\section{Connections with exactness and the LLP}\label{sec LLP}

There are two interesting cases where we can show that the injective crossed product agrees with the reduced crossed product.  Our goal in this section is to discuss these cases, and deduce some consequences: perhaps most notable of these is that we give examples where $\rtimes_r\neq \rtimes_\inj$, and use this to give new examples of groups $G$ for which $C^*_{\max}(G)$ does not have the LLP.

The first such case occurs when $G$ is exact.  We give an ad-hoc definition of exactness that is convenient for our purposes.   See 
\cite{Brodzki:2015kb}*{Theorem A} for a proof that this is equivalent to more standard definitions (the result of \cite{Brodzki:2015kb}*{Theorem A} is only stated for second countable $G$, but the proof works in general with minor modifications). 

\begin{definition}\label{ex def}
A locally compact group $G$ is \emph{exact} if it admits an amenable continuous action on a compact space $X$.
\end{definition}

\begin{proposition}\label{inj ex}
Let $G$ be an exact locally compact group.  Then for any $G$-$C^*$-algebra $A$, $A\rtimes_\inj G=A\rtimes_r G$.  
\end{proposition}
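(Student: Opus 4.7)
The plan is to show $\|\cdot\|_{\inj}\le \|\cdot\|_r$ on $C_c(G,A)$, the reverse inequality being automatic from the fact that $\|\cdot\|_\inj$ dominates $\|\cdot\|_r$. The strategy is to embed $A$ equivariantly into a $G$-algebra on which the maximal and reduced crossed products coincide; the $\iota$-norm for that embedding will then equal $\|\cdot\|_r$, and by definition it dominates $\|\cdot\|_\inj$.

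Concretely, let $X$ be a compact $G$-space on which the action is amenable, as provided by Definition \ref{ex def}. Consider the $G$-equivariant injective $*$-homomorphism
$$
\iota\colon A\to A\otimes C(X), \qquad a\mapsto a\otimes 1,
$$
where $A\otimes C(X)$ carries the diagonal $G$-action (and, since $C(X)$ is nuclear, there is no ambiguity in the tensor product). Injectivity of $\iota$ is witnessed by evaluation at any $x\in X$, and equivariance is immediate. By the classical result of Anantharaman-Delaroche, amenability of the $G$-action on $X$ forces the diagonal action on $B\otimes C(X)$ to be amenable for every $G$-algebra $B$, and in particular
$$
(A\otimes C(X))\rtimes_{\max}G \;=\; (A\otimes C(X))\rtimes_{r}G.
$$
Moreover, injectivity of the reduced crossed product functor applied to $\iota$ yields a faithful inclusion $A\rtimes_rG\hookrightarrow (A\otimes C(X))\rtimes_r G$.

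Combining these two facts, for $a\in C_c(G,A)$ we have
$$
\|a\|_\iota \;=\; \|(\iota\rtimes G)(a)\|_{(A\otimes C(X))\rtimes_{\max}G} \;=\; \|(\iota\rtimes G)(a)\|_{(A\otimes C(X))\rtimes_{r}G} \;=\; \|a\|_r,
$$
whence $\|a\|_\inj\le \|a\|_\iota=\|a\|_r$ by the very definition of the injective norm. This proves the required equality $A\rtimes_\inj G=A\rtimes_r G$. The only non-formal input is the Anantharaman-Delaroche characterization of an amenable $G$-action on a compact space as one for which maximal and reduced crossed products agree after tensoring with an arbitrary $G$-algebra; everything else is bookkeeping.
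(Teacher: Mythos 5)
Your proof is correct and follows essentially the same route as the paper: embed $A$ into $A\otimes C(X)$ via $a\mapsto a\otimes 1$ for an amenable compact $G$-space $X$, use that the maximal and reduced crossed products of $A\otimes C(X)$ coincide, and conclude via injectivity of $\rtimes_r$ that the $\iota$-norm equals the reduced norm, so $\|\cdot\|_\inj\leq\|\cdot\|_r$. The only cosmetic difference is that the paper phrases the last step as the integrated form $A\rtimes_{\max}G\to(A\otimes C(X))\rtimes_r G$ factoring through $A\rtimes_r G$, which is the same observation.
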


\begin{proof}
As $G$ is exact, $G$ acts continuously and amenably on some compact space $X$.  For any $G$-$C^*$-algebra $A$, we thus have that $(A\otimes C(X))\rtimes_{\max} G=(A\otimes C(X))\rtimes_r G$.  Integrating the covariant representation of $(A,G)$ in $(A\otimes C(X))\rtimes_{\max} G$ given by 
$$
a\mapsto a\otimes 1,\quad g\mapsto \delta_g
$$
gives a $*$-homomorphism 
$$
A\rtimes_{\max} G \to (A\otimes C(X))\rtimes_{\max} G=(A\otimes C(X))\rtimes_{r} G.
$$
As $\rtimes_r$ is injective, this factors through $A\rtimes_r G$.  Thus the reduced norm on $C_c(G,A)$ is one of the norms that $\|\cdot\|_\inj$ is the infimum over, and the result follows.
\end{proof}

For the second example where $\rtimes_\inj=\rtimes_r$, we need to restrict to the case of discrete groups.  We recall an ad-hoc definition of the local lifting property that is convenient for our purposes. See \cite{Brown:2008qy}*{Corollary 13.2.5} for a proof that this is equivalent to the usual definition.

\begin{definition}\label{llp def}
A $C^*$-algebra $A$ has the \emph{local lifting property} (LLP) if for any Hilbert space $H$, $A\otimes \mathcal{B}(H)=A\otimes_{\max}\mathcal{B}(H)$, that is, if there is a unique \cstar{}norm on the algebraic tensor product $A\odot \mathcal{B}(H)$ for every $H$.
\end{definition}

\begin{proposition}\label{inj llp}
Let $G$ be a discrete group such that $C^*_{\max}(G)$ has the local lifting property.  Then for any $G$-$C^*$-algebra $A$, $A\rtimes_\inj G=A\rtimes_r G$.
\end{proposition}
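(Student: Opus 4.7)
The plan is to realize $A\rtimes_\inj G$ concretely inside $B\rtimes_\max G$ for a particular injective $\Cst$-algebra $B$, use the resulting inner $G$-action to identify $B\rtimes_\max G$ with a tensor product, and then apply Kirchberg's theorem and Fell's absorption to compute the norm of a generic element of $C_c(G,A)$ and see that it equals $\|{\cdot}\|_r$.

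Concretely, I would fix a faithful nondegenerate $*$-representation $\pi\colon A\to \B(H)$ and form the equivariant embedding $\widetilde{\pi}\colon A\to \B(L^2(G,H))_c$ supplied by Corollary \ref{g inj exist}. Discreteness of $G$ makes every element of $\B(L^2(G,H))$ automatically $G$-continuous, so the target coincides with $B:=\B(L^2(G,H))$, an injective $\Cst$-algebra whose $G$-action is inner via the amplified regular representation $\lambda\colon G\to U(B)$. Lemma \ref{realize inj} then identifies $A\rtimes_\inj G$ with the closure of $C_c(G,A)$ inside $B\rtimes_\max G$.

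Because $G$ is discrete, strict continuity of $g\mapsto\lambda_g$ is automatic, and the standard inner-action computation yields an isomorphism
$$
B\rtimes_\max G \;\cong\; B\otimes_\max \Cst_\max(G), \qquad a\delta_g\longmapsto \widetilde{\pi}(a)\lambda_g\otimes \delta_g \quad (a\in A).
$$
Since $B$ is injective it has the WEP, and $\Cst_\max(G)$ has the LLP by hypothesis, so Kirchberg's theorem collapses the maximal tensor product to the spatial one: $B\otimes_\max \Cst_\max(G)=B\otimes \Cst_\max(G)$. Choosing any faithful representation $\sigma\colon \Cst_\max(G)\to \B(K)$ with associated unitary representation $U_g:=\sigma(\delta_g)$ realizes $B\otimes\Cst_\max(G)\sbe \B(L^2(G,H)\otimes K)$, in which $\sum_g \widetilde{\pi}(a_g)\lambda_g\otimes \delta_g$ becomes $\sum_g \widetilde{\pi}(a_g)\lambda_g\otimes U_g$. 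The Fell-absorption unitary $V\in \B(\ell^2(G)\otimes K)$ defined by $V(\delta_g\otimes \eta)=\delta_g\otimes U_g\eta$, extended by the identity on the middle $H$-factor of $\ell^2(G)\otimes H\otimes K$, commutes with $\widetilde{\pi}(a)\otimes 1$ for every $a\in A$ and intertwines $\lambda_g\otimes 1\otimes U_g$ with $\lambda_g\otimes 1\otimes 1$. Conjugating shows our element is unitarily equivalent to $\sum_g \widetilde{\pi}(a_g)\lambda_g\otimes 1_K$, which is an amplification of the regular covariant representation and hence has norm $\|a\|_r$. Tracing these identifications back gives $\|a\|_\inj=\|a\|_{B\rtimes_\max G}=\|a\|_r$, which together with the always-valid inequality $\|a\|_r\leq \|a\|_\inj$ yields $A\rtimes_\inj G=A\rtimes_r G$.

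The individual steps are all reasonably standard, so the main obstacle is mostly bookkeeping; but it is worth stressing that discreteness of $G$ enters in two essentially different ways. It is what guarantees that $\B(L^2(G,H))_c$ is the full injective algebra $\B(L^2(G,H))$, so that we can feed an algebra with WEP into Kirchberg's theorem rather than worry about whether the $G$-continuous subalgebra is WEP; and it is what makes the inner-action isomorphism $B\rtimes_\max G\cong B\otimes_\max \Cst_\max(G)$ available without any strict-continuity verification for $g\mapsto \lambda_g\in M(B)$. Circumventing either of these is what appears to obstruct a straightforward extension of the argument to nondiscrete locally compact groups.
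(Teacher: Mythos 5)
Your proposal is correct and follows essentially the same route as the paper: the same $G$-injective embedding $\widetilde{\pi}\colon A\to \B(\ell^2(G,H))$ via Corollary \ref{g inj exist} and Lemma \ref{realize inj}, the same untwisting isomorphism $\B(\ell^2(G,H))\rtimes_{\max}G\cong \B(\ell^2(G,H))\otimes_{\max}C^*_{\max}(G)$, and the same use of the LLP to pass to the spatial tensor product. The only divergence is the final step, where you verify $\|a\|_{\inj}=\|a\|_r$ by an explicit Fell-absorption unitary, whereas the paper instead identifies the image with that of $A\rtimes_r G$ under the injective reduced dual coaction $\delta\colon A\rtimes_r G\to A\rtimes_r G\otimes C^*_{\max}(G)$ — two formulations of the same fact, your version being slightly more self-contained.
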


\begin{proof}
Let $\pi:A\to \mathcal{B}(H)$ be any faithful (non-equivariant) $*$-representation, where $\mathcal{B}(H)$ is equipped with the trivial $G$-action.  Let $\widetilde{\pi}:A\to \mathcal{B}(\ell^2(G,H))$ be the amplified form of this representation as in Corollary \ref{g inj exist}, where we equip $\mathcal{B}(\ell^2(G,H))$ with the conjugation action associated to the amplification of the left regular representation $\lambda$.  Then Lemma \ref{realize inj} implies that the integrated form 
$$
\widetilde{\pi}\rtimes G :C_c(G,A)\to \mathcal{B}(\ell^2(G,H))\rtimes_{\max} G
$$
extends to an inclusion 
$$
A\rtimes_{\inj} G \to \mathcal{B}(\ell^2(G,H))\rtimes_{\max} G.
$$
Identify now $\ell^2(G,H)$ with $H\otimes \ell^2(G)$ in the usual way.  As the action of $G$ on $\mathcal{B}(H\otimes \ell^2(G))$ is inner, there is a canonical `untwisting isomorphism'
\begin{align*}
\Phi: \mathcal{B}(H\otimes\ell^2(G))\rtimes_{\max} G & \to  \mathcal{B}(H\otimes \ell^2(G))\otimes_{\max} C^*_{\max}(G) \\ T\delta_g & \mapsto T(1\otimes \lambda_g)\otimes \delta_g.
\end{align*}
On the other hand, using the LLP for $C^*_{\max}(G)$ gives a canonical identification
$$
\mathcal{B}(H\otimes \ell^2(G))\otimes_{\max} C^*_{\max}(G)=\mathcal{B}(H\otimes \ell^2(G))\otimes C^*_{\max}(G),
$$
so we may identify the image of $\Phi$ with the algebra on the right hand side above.  

Consider finally the commutative diagram
$$
\xymatrix{ A\rtimes_{\inj} G \ar[r] \ar[dr]^{\psi} & \mathcal{B}(H\otimes \ell^2(G))\rtimes_{\max} G \ar[d]^-{\Phi} \\ &  \mathcal{B}(H\otimes \ell^2(G))\otimes C^*_{\max}(G)},
$$
where the diagonal arrow $\psi$ is by definition the composition of the other two maps, so in particular injective.  Computing, the diagonal arrow is the integrated form of the covariant pair given on $a\in A$ and $g\in G$ by  
$$
a\mapsto \widetilde{\pi}(a)\otimes 1,\quad g\mapsto 1\otimes \lambda_g\otimes \delta_g.
$$
The image of this map  therefore agrees precisely with the image of $A\rtimes_r G$ under the (injective) composition of the coaction 
$$
\delta:A\rtimes_r G \to A\rtimes_r G \otimes C^*_{\max}(G)
$$
as in \cite{Echterhoff:2006aa}*{Definition A.27} and of the tensor product $*$-homomorphism 
$$
(\widetilde{\pi}\rtimes (1\otimes \lambda))\otimes \text{id}:A\rtimes_r G \otimes C^*_{\max}(G)\to \mathcal{B}(H\otimes \ell^2(G))\otimes C^*_{\max}(G).
$$
As we already remarked that the diagonal arrow $\psi$ is injective, we thus have that the identity map on $C_c(G,A)$ extends to a injection $A\rtimes_\inj G \to A\rtimes_r G$, and are done.
\end{proof}

\begin{corollary}
If $G$ is an exact locally compact group, or if $G$ is discrete and $C^*_{\max}(G)$ has the LLP, then the reduced crossed product is the \emph{only} injective crossed product functor. \end{corollary}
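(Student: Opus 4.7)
The proof is essentially an immediate sandwich argument built from the three main results already established, so the plan is short. First I would recall that for \emph{any} crossed product functor $\rtimes_\mu$ one has by definition the domination $\|\cdot\|_r \le \|\cdot\|_\mu$ on $C_c(G,A)$, so that the identity on $C_c(G,A)$ extends to a surjection $A\rtimes_\mu G \onto A\rtimes_r G$. Thus to conclude $\rtimes_\mu = \rtimes_r$ it suffices to produce the reverse domination $\|\cdot\|_\mu \le \|\cdot\|_r$ whenever $\rtimes_\mu$ is injective.

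Next I would invoke Proposition \ref{inj inj}: since $\rtimes_\inj$ is the maximal injective crossed product functor, any injective crossed product $\rtimes_\mu$ is dominated by $\rtimes_\inj$, so the identity on $C_c(G,A)$ extends to a surjection $A\rtimes_\inj G \onto A\rtimes_\mu G$. Combined with the surjection of the previous paragraph, this yields the chain
\[
A\rtimes_\inj G \onto A\rtimes_\mu G \onto A\rtimes_r G,
\]
each map being the identity on $C_c(G,A)$.

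Finally, under either hypothesis I would apply one of Proposition \ref{inj ex} (in the exact case) or Proposition \ref{inj llp} (in the discrete LLP case) to conclude that the composition $A\rtimes_\inj G \to A\rtimes_r G$ is already an isomorphism. Hence the middle map $A\rtimes_\mu G \to A\rtimes_r G$ must be injective as well as surjective, so $\rtimes_\mu = \rtimes_r$, as claimed. There is no real obstacle here: all of the work has been done in the preceding propositions, and the corollary is simply the observation that the maximal injective functor being equal to $\rtimes_r$ forces every injective functor, sandwiched between them, to coincide with $\rtimes_r$.
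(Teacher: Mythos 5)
Your proposal is correct and is essentially identical to the paper's own proof: both sandwich an arbitrary injective functor as $\rtimes_r\leq\rtimes_\mu\leq\rtimes_\inj$ via Proposition \ref{inj inj} and then collapse the chain using Proposition \ref{inj ex} or Proposition \ref{inj llp}. No issues.
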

\begin{proof}
If $\rtimes_\mu$ is injective, then Proposition \ref{inj inj} gives that $\rtimes_r\leq \rtimes_\mu\leq \rtimes_\inj$, whence by Propositions~\ref{inj ex} and~\ref{inj llp}, all three are equal.  
\end{proof}

This is in stark contrast to the case of exact crossed products: indeed, if $G$ is any non-amenable group, then there are a large class of exotic exact crossed products arising for example from the Brown-Guentner construction as discussed in \cite{Buss:2015ty}*{Definition 3.6}.

At this point, it is reasonable to ask if $\rtimes_\inj$ \emph{ever} differs from the reduced crossed product!  We can show that this is indeed the case using the relatively explicit construction of non-exact groups due to Osajda \cite{Osajda:2014ys}.  For the proof we need the
 following fact, which is immediate from Lemma \ref{inj exist} and Lemma \ref{realize inj}.

\begin{corollary}\label{l inf max=inj}
For any discrete group $G$, $\ell^\infty(G)\rtimes_{\max} G=\ell^\infty(G)\rtimes_\inj G$. \qed
\end{corollary}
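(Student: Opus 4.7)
The plan is to observe that for a discrete group $G$, the algebra $\ell^\infty(G)$ is literally $\contub(G,\C)$ (with the translation action), since any function on a discrete group is uniformly continuous, and $\C$ is trivially an injective $\Cst$-algebra. Thus Proposition \ref{inj exist} applies directly to show that $\ell^\infty(G)$ is itself $G$-injective.

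Given this, one can conclude in two equivalent ways. The shortest is to cite Corollary \ref{cor max}: since $\ell^\infty(G)$ is $G$-injective, $\ell^\infty(G)\rtimes_\inj G = \ell^\infty(G)\rtimes_\max G$. Alternatively, and more in the spirit of the hint, take $\pi$ in Lemma \ref{realize inj} to be the identity map $\ell^\infty(G)\to\ell^\infty(G)$; this is a $G$-injective embedding by the observation above, so the lemma identifies $\ell^\infty(G)\rtimes_\inj G$ with the closure of $C_c(G,\ell^\infty(G))$ inside $\ell^\infty(G)\rtimes_\max G$, which is $\ell^\infty(G)\rtimes_\max G$ itself. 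There is no real obstacle here — the statement is essentially a direct packaging of Proposition \ref{inj exist} together with either Corollary \ref{cor max} or Lemma \ref{realize inj}.
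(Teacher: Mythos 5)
Your proposal is correct and follows exactly the route the paper intends: the paper states the corollary is ``immediate from Lemma \ref{inj exist} and Lemma \ref{realize inj}'', i.e.\ one identifies $\ell^\infty(G)$ with $\contub(G,\C)$ (every bounded function on a discrete group is uniformly continuous), concludes $G$-injectivity from Proposition \ref{inj exist}, and then applies Lemma \ref{realize inj} (equivalently Corollary \ref{cor max}). Nothing is missing.
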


We can now show that $\rtimes_\inj$ is at least sometimes not equal to $\rtimes_r$.  Osajda shows that groups as in the statement exist  \cite{Osajda:2014ys}.

\begin{lemma}\label{not red}
Let $G$ be a non-exact group equipped with an isometric embedding $X\to G$, where $X$ is a coarse union of a sequence of finite connected graphs with a uniform bound on vertex degrees, with girth tending to infinity, and that is an expander.  Then $\ell^\infty(G)\rtimes_\inj G \neq \ell^\infty(G)\rtimes_r G$.
\end{lemma}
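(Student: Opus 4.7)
The plan is to use Corollary~\ref{l inf max=inj} to reduce the statement to $\ell^\infty(G)\rtimes_\max G\neq \ell^\infty(G)\rtimes_r G$, and then to exhibit a Kazhdan-type projection in the maximal crossed product that is detected by a ``ghost-killing'' $*$-homomorphism which cannot factor through the reduced crossed product. This is essentially the Higson--Lafforgue--Skandalis strategy, transferred from the coarse groupoid setting to the crossed product setting via the embedding $X\hookrightarrow G$.

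I would first identify $\ell^\infty(G)\rtimes_r G$ with the uniform Roe algebra $C^*_u(G)\subseteq\mathcal{B}(\ell^2(G))$ via the regular covariant representation, under which $C_c(G,\ell^\infty(G))$ corresponds to the $*$-algebra of finite-propagation operators. For each finite graph $X_n$ in the coarse disjoint union $X=\bigsqcup_n X_n$, let $\Delta_n$ be the normalised graph Laplacian on $\ell^2(X_n)$, extended by zero off $X_n\subseteq G$ to a bounded self-adjoint operator on $\ell^2(G)$. The uniform vertex-degree bound together with the isometric embedding $X\hookrightarrow G$ ensure that $\Delta:=\sum_n\Delta_n$ is bounded, self-adjoint and of finite propagation, and hence lies in $\ell^\infty(G)\rtimes_\alg G\subseteq \ell^\infty(G)\rtimes_\max G$. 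The expander assumption provides a uniform spectral gap $\lambda>0$ with $\mathrm{spec}(\Delta_n)\setminus\{0\}\subseteq[\lambda,2]$ for every $n$; choosing a continuous $f\colon[0,2]\to[0,1]$ with $f(0)=1$ and $f\equiv 0$ on $[\lambda,2]$, the continuous functional calculus in the maximal crossed product yields a projection $P:=f(\Delta)\in\ell^\infty(G)\rtimes_\max G$ whose restriction to each $\ell^2(X_n)$ is the rank-one projection onto constant functions.

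The main step, and the main obstacle, is to rule out the possibility that $P$ is identified with its image $\overline{P}$ in the reduced crossed product. Here I would use the girth-tending-to-infinity hypothesis to build, for each fixed propagation $R$ and each $n$ with $R<\mathrm{girth}(X_n)/2$, a finite-dimensional $*$-representation $\pi_n$ of the $R$-propagation finite-propagation operators into $\mathrm{M}_{|X_n|}(\mathbb{C})$ by restricting to $\ell^2(X_n)$ (the girth condition guarantees that the restricted translation operators stay inside $X_n$, so the algebraic structure is preserved). Via the universal property of the maximal norm, these $\pi_n$'s assemble into a $*$-homomorphism
\[
\psi\colon\ell^\infty(G)\rtimes_\max G\longrightarrow \prod_n\mathrm{M}_{|X_n|}(\mathbb{C})\,\big/\,\bigoplus_n\mathrm{M}_{|X_n|}(\mathbb{C}).
\]
By construction $\psi(P)\neq 0$, since $\pi_n(P)$ is the rank-one projection $p_n$, which has norm $1$. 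On the other hand $\psi$ annihilates every ghost operator in $C^*_u(G)$, because matrix coefficients decaying at infinity force $\|\pi_n(\cdot)\|\to 0$.

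Finally, I would put these pieces together. The image $\overline{P}\in\ell^\infty(G)\rtimes_r G=C^*_u(G)$ is manifestly a ghost projection, as its matrix coefficients on $X_n\times X_n$ all have modulus $1/|X_n|\to 0$. If the canonical quotient $\ell^\infty(G)\rtimes_\max G\twoheadrightarrow \ell^\infty(G)\rtimes_r G$ were an isomorphism, then $\psi$ would descend to a $*$-homomorphism on the reduced crossed product; but then $\psi(P)$ would equal the value of this descended map on $\overline{P}$, which would be $0$ since $\overline{P}$ is a ghost---contradicting $\psi(P)\neq 0$. Thus the quotient map is not injective, which together with Corollary~\ref{l inf max=inj} gives $\ell^\infty(G)\rtimes_\inj G\neq \ell^\infty(G)\rtimes_r G$.
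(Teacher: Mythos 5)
Your opening move (reducing via Corollary \ref{l inf max=inj} to the question of whether $\ell^\infty(G)\rtimes_{\max} G = \ell^\infty(G)\rtimes_r G$, and then locating the obstruction in the expander Laplacian) is exactly the paper's strategy, and the paper likewise passes to a Roe-algebra picture, by cutting down with the characteristic function $\chi_X$ to identify the corners with $C^*_{u,\max}(X)$ and $C^*_u(X)$. But the mechanism you use to separate the two completions has a fatal flaw: the block-restriction map $\psi$ does \emph{not} annihilate ghost operators. A ghost is an operator whose matrix coefficients $\langle T\delta_x,\delta_y\rangle$ tend to $0$ at infinity; this does not force the norms of the diagonal blocks $Q_nTQ_n$ to tend to $0$. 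The standard counterexample is precisely the operator you are trying to use: the spectral projection $\overline{P}$ of $\Delta$ in $C^*_u(X)$ restricts on each $\ell^2(X_n)$ to the rank-one projection onto constants, whose matrix entries are $1/|X_n|\to 0$ (so $\overline{P}$ is a ghost) but whose block norm is $1$ for every $n$. Consequently, even if the quotient map were an isomorphism, the descended map would send $\overline{P}$ to a nonzero element, and no contradiction arises. Two further points would also need repair: cutting down to $\ell^2(X_n)$ is not multiplicative on finite-propagation operators on $\ell^2(G)$ (such operators move mass from $X_n$ into $G\setminus X$, and the girth hypothesis says nothing about this --- one must first compress by $\chi_X$ as the paper does); and the functional calculus $f(\Delta)$ performed in the \emph{maximal} completion is not a projection, because the spectral gap you invoke is a statement about the spectrum of $\Delta$ in the \emph{reduced} algebra only. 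Indeed the failure of the gap in the maximal completion is not a technicality to be worked around --- it is the actual proof.

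That is what the paper does: after identifying the corners with $C^*_{u,\max}(X)$ and $C^*_u(X)$, it compares the spectrum of the (unnormalised) graph Laplacian $\Delta$ in the two algebras. The expander condition gives $\mathrm{spec}_{C^*_u(X)}(\Delta)\subseteq\{0\}\cup[c,\infty)$ for some $c>0$, while the girth-tending-to-infinity hypothesis, via \cite{Willett:2013cr}*{Lemma 8.9}, forces $\mathrm{spec}_{C^*_{u,\max}(X)}(\Delta)$ to contain points in $(0,c]$ for every $c>0$. Since a surjective $*$-homomorphism can only shrink spectra, and here the quotient $C^*_{u,\max}(X)\to C^*_u(X)$ strictly shrinks the spectrum of $\Delta$, the two algebras differ. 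If you want to run a ghost-based argument instead, you would need a genuinely different detection device (for instance the $K$-theoretic arguments of \cite{Willett:2010ud} and \cite{Willett:2010zh}, which the paper mentions as an alternative), not the block-restriction homomorphism.
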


\begin{proof}
Using Corollary \ref{l inf max=inj}, it suffices to prove that $\ell^\infty(G)\rtimes_{\max} G \neq \ell^\infty(G)\rtimes_r G$.  Let $\chi_X\in \ell^\infty(G)$ be the characteristic function of $X$.  Then using that $\chi_X(\ell^\infty(G)\rtimes_\alg G)\chi_X$ identifies with the algebraic uniform Roe algebra $\C_u[X]$, it is not too difficult to see that the corners
$$
\chi_X(\ell^\infty(G)\rtimes_{\max} G)\chi_X \quad \text{and}\quad \chi_X(\ell^\infty(G)\rtimes_r G)\chi_X
$$
identify respectively with the maximal and reduced uniform Roe algebras of $X$, denoted $C^*_{u,\max}(X)$ and $C^*_u(X)$.
 Hence it suffices to show that $C^*_{u,\max}(X)$ and $C^*_u(X)$ are not equal.  This can be done $K$-theoretically using the main ideas of \cites{Willett:2010ud,Willett:2010zh}: the basic point is that the maximal coarse Baum-Connes conjecture for $X$ is true, but the usual version is false.  We give a somewhat more direct proof, however, based on \cite{Willett:2013cr}*{Section 8}.  

For this, let $\Delta\in \C_u[X]$ denote the graph Laplacian on $X$; thus if $X=\bigsqcup X_n$ is the decomposition of $X$ into finite connected graphs, we have that $\Delta$ has matrix coefficients given by 
$$
\Delta_{xy}=\left\{\begin{array}{ll} \text{degree}(x) & x=y \\ -1 & x,y \text{ connected by an edge in some $X_n$} \\  0 & \text{ otherwise}\end{array}\right.
$$
According to the definition of $X$ being an expander, there is some $c>0$ such that the spectrum $\text{spec}_{C^*_u(X)}(\Delta)$ of $\Delta$ considered as an element of $C^*_u(X)$ is contained in $\{0\}\cup [c,\infty)$.  On the other hand, \cite{Willett:2013cr}*{Lemma 8.9} combined with the assumption that the girth of the sequence $(X_n)$ tends to infinity implies that the spectrum $\text{spec}_{C^*_{u,\max}(X)}(\Delta)$ of $\Delta$ considered as an element of $C^*_{u,\max}(X)$ contains points in $(0,c]$ for any $c>0$.  Hence $C^*_{u,\max}(X)\neq C^*_u(X)$ as required.
\end{proof}

The following corollary is immediate from Lemma \ref{inj llp} and Lemma \ref{not red}.  

\begin{corollary}\label{not llp}
Let $G$ be as in the hypotheses of Lemma \ref{not red}.  Then $C^*_{\max}(G)$ does not have the LLP. \qed
\end{corollary}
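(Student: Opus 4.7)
The plan is to argue by contraposition using the two preceding results the corollary cites. The statement is essentially a one-line combination: Proposition \ref{inj llp} shows that LLP of $C^*_{\max}(G)$ forces the injective and reduced crossed products to coincide on every $G$-algebra, while Lemma \ref{not red} exhibits a specific $G$-algebra, namely $\ell^\infty(G)$, on which these two functors disagree under the hypotheses on $G$. These two facts together immediately preclude LLP for $C^*_{\max}(G)$.

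More precisely, I would suppose for contradiction that $C^*_{\max}(G)$ has the LLP and apply Proposition \ref{inj llp} to the specific $G$-algebra $A=\ell^\infty(G)$ (with its translation action). The conclusion of that proposition then yields the equality
\[
\ell^\infty(G)\rtimes_\inj G \;=\; \ell^\infty(G)\rtimes_r G.
\]
On the other hand, since by hypothesis $G$ satisfies the assumptions of Lemma \ref{not red}, that lemma asserts
\[
\ell^\infty(G)\rtimes_\inj G \;\neq\; \ell^\infty(G)\rtimes_r G,
\]
which directly contradicts the previous display. Hence $C^*_{\max}(G)$ cannot have the LLP.

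There is no real obstacle here; the content has been packaged into the two lemmas already proven. The only thing worth double-checking is that the hypotheses line up cleanly: Proposition \ref{inj llp} requires $G$ to be discrete, and Lemma \ref{not red} is stated for a discrete group (since $\ell^\infty(G)$ and the embedded expander $X\subset G$ presume discreteness). Both requirements are built into the hypotheses of Lemma \ref{not red} that the corollary inherits, so the argument goes through verbatim.
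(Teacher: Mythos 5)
Your proof is correct and is exactly the paper's argument: the authors state the corollary as "immediate from" Proposition \ref{inj llp} and Lemma \ref{not red}, which is precisely the contrapositive combination you spell out. Your check that discreteness is built into the hypotheses of Lemma \ref{not red} (Osajda's groups are discrete) is also the right thing to verify.
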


There seem to be very few examples where $C^*_{\max}(G)$ is known not to have the LLP.  We discuss this, and the connection between this property and exactness, in the next few remarks.

\begin{remark}\label{gen llp}
The class of discrete groups $G$ for which $C^*_{\max}(G)$ has the LLP contains all amenable groups, and is closed under taking subgroups, and free products with finite amalgam \cite{Ozawa:2004ab}*{Proposition 3.21 and following discussion}.  However, it is not clear to us that it contains, for example, any non-exact group, or even a group without the Haagerup approximation property.  On the other hand, it appears the only known examples where $C^*_{\max}(G)$ does not have the LLP other than those of Corollary \ref{not llp} are those constructed by Thom in \cite{Thom:2010aa} (other examples where $C^*_{\max}(G)$ does not have the LP were constructed by Ozawa \cite{Ozawa:2004aa}).    
\end{remark}

\begin{remark}\label{llp implies exact}
It is natural to ask whether the LLP for $C^*_{\max}(G)$ implies that $G$ is exact.  Some evidence for this goes as follows.  If $C^*_{\max}(G)$ has the LLP, then Lemma \ref{inj llp} and Corollary \ref{l inf max=inj} imply that $\ell^\infty(G)\rtimes_{\max}G=\ell^\infty(G)\rtimes_r G$.  It would be reasonable (well, arguably...) to expect that this implies that the action of $G$ on the maximal ideal space $\beta G$ of $\ell^\infty(G)$ is amenable, and thus that $G$ is exact.  Note that if $\partial G:=\beta G\setminus G$ is the associated corona of $G$, then the equality $C(\partial G)\rtimes_{\max} G =C(\partial G)\rtimes_r G$ does imply -- indeed characterizes -- that $G$ is exact using the results of \cite{Roe:2013rt}*{Section 5.1}.

On the other hand, if one could produce a non-exact group with $C^*_{\max}(G)$ having the LLP, this would give an example of a non-amenable action on a compact space, such that the associated maximal and reduced crossed products are the same.  This would answer a long-standing open question.
\end{remark}

\begin{remark}\label{exact implies llp}
The converse question, whether exactness of $G$ implies that $C^*_{\max}(G)$ has the LLP, has a negative answer.  Indeed, Thom's example of a group without the LLP from \cite{Thom:2010aa}*{Section 2} is exact.  To construct his example $G$, Thom starts with a specfic (countable) subgroup $G_0$ of $GL_5(R)$, where $R=\mathbb{F}_p[t,t^{-1}]$ is the ring of Laurent polynomials over the finite field with $p$ elements for some prime $p$.  He then defines $G$ to be the quotient of $G_0$ by some specific subgroup $C$ of its center.  Now, $G_0$ is a countable subgroup of $GL_n(R)$ where $R$ is a commutative ring with unit, and therefore has Yu's property A by \cite{Guentner:2013aa}*{Theorem 4.6 and Theorem 5.2.1}.  Hence $C^*_r(G_0)$ is exact by the main result of \cite{Ozawa:2000th}.  On the other hand, as $C$ is a central subgroup of $G_0$, it is abelian, so in particular amenable, and so the quotient map $G_0\to G$ induces a surjective $*$-homomorphism $C^*_r(G_0)\to C^*_r(G)$.  In particular, $C^*_r(G)$ is a quotient of an exact $C^*$-algebra, so exact by 
\cite{Brown:2008qy}*{Corollary 9.4.3}.  Hence $G$ is exact. Similar reasoning shows that the other example of a group not satisfying LLP given in Section~3 of
Thom's paper is exact as well.
\end{remark}

 \section{The injective group algebra, amenability, and the WEP}\label{sec amenable}

We now study the group algebra $C^*_\inj (G):=\C{\rtimes}_\inj G$.  

The first result we are aiming for is a direct analogue of a well-known property for the reduced group $C^*$-algebra of a discrete group
 \cite{Brown:2006aa}*{Corollary 4.1.2}, and provides some evidence that we might have $C^*_\inj(G)=C^*_r(G)$ in general; it does at least show that $C^*_\inj(G)\neq C^*_{\max}(G)$ for a general discrete non-amenable group (and hence, that $\rtimes_\inj\neq \rtimes_\max$ if $G$ is not amenable).

To state the result, we recall one of the definitions of an amenable trace \cite{Brown:2006aa}*{Theorem 3.1.6}.  

\begin{definition}
Let $\tau:A\to \C$ be a tracial state on a unital $C^*$-algebra, let $\pi_\tau:A\to \mathcal{B}(L^2(A,\tau))$ be the associated GNS representation, and let $\pi_\tau(A)''$ be the von Neumann algebra generated by the image of $A$ in this representation.  Then $\tau$ is \emph{amenable} if for any faithful representation $A\subseteq \mathcal{B}(H)$ there is a ucp map $\phi:\mathcal{B}(H)\to \pi_\tau(A)''$ such that $\phi(a)=\pi_\tau(a)$ for all $a\in A$.

We say that a tracial state on a non-unital $C^*$-algebra is \emph{amenable} if its canonical extension to a tracial state on the unitization is amenable.\footnote{We are not sure if there is a standard definition of amenability of a trace on a non-unital $C^*$-algebra; this ad-hoc one is convenient for our purposes.}.
\end{definition}

In other words, the trace $\tau$ is amenable if its GNS representation $\pi_\tau$ is an injective ccp map (in the sense of our Definition~\ref{g inj} for the trivial group) when viewed as a map $A\to \pi_\tau(A)''$. In particular, $\tau$ is amenable if $\pi_\tau(A)''$ is an injective von Neumann algebra (e.g. if $\pi_\tau(A)$ is a nuclear \cstar{}algebra).

\begin{example}\label{at ex}
Let $A$ be a $C^*$-algebra, let $\pi:A\to M_n(\C)$ be a finite-dimensional representation, and let $\text{tr}:M_n(\C)\to \C$ be the canonical tracial state.  Then the pull-back of $\text{tr}$ to (the unitization of) $A$ is amenable.  Indeed, in this case $L^2(A,\tau)$ is finite dimensional by uniqueness of GNS representations, whence $\pi_\tau(A)''$ is finite dimensional, so in particular injective.  The existence of an appropriate $\phi$ thus follows as $\pi_\tau(A)''$ is injective. 
\end{example}

\begin{proposition}\label{group alg}
The group algebra $C^*_{\inj}(G)$ has an amenable trace if and only if $G$ is amenable\footnote{The same property holds for $C^*_r(G)$ in place of $C^*_\inj(G)$, with essentially the same proof; this is well-known, at least when $G$ is discrete \cite{Brown:2006aa}*{Corollary 4.1.2}.}.  In particular, if $G$ is non-amenable then $C^*_{\inj}(G)$ has no finite dimensional representations, and is therefore not equal to $C^*_{\max}(G)$.
\end{proposition}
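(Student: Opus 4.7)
For $(\Leftarrow)$: when $G$ is amenable, the $G$-algebra $\mathbb{C}$ (with trivial action) has the $G$-WEP, because an equivariant ccp map $C_{ub}(G)\to\mathbb{C}$ extending the inclusion of constants is precisely a left-invariant mean. Proposition~\ref{prop-GWEP} then gives $C^*_\inj(G)=C^*_\max(G)$, and the trivial representation of $G$ yields a one-dimensional quotient of $C^*_\inj(G)$ whose associated trace is amenable by Example~\ref{at ex}. The ``in particular'' clause will follow from the main equivalence, since any finite-dimensional representation of $C^*_\inj(G)$ produces an amenable trace by Example~\ref{at ex}, so non-amenability of $G$ rules out all finite-dimensional representations; in particular, the trivial representation of $C^*_\max(G)$ cannot factor through $C^*_\inj(G)$, forcing $C^*_\inj(G)\neq C^*_\max(G)$.

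For the nontrivial direction, suppose $\tau$ is an amenable trace on $C^*_\inj(G)$. Let $\pi_\tau\colon C^*_\inj(G)\to\mathcal{B}(H_\tau)$ be the GNS representation, $u_\tau\colon G\to\mathcal{U}(H_\tau)$ the associated unitary representation, $M:=\pi_\tau(C^*_\inj(G))''$, and $\phi\colon\mathcal{B}(H_\tau)\to M$ a ucp extension of $\pi_\tau$ provided by amenability. The first key observation is that $\omega:=\tau\circ\phi$ is an $\Ad(u_\tau(G))$-invariant state on $\mathcal{B}(H_\tau)$: each $u_\tau(g)$ lies in the multiplicative domain of $\phi$ (since $\phi(u_\tau(g))=u_\tau(g)$ is a unitary in $M$), so $\phi(u_\tau(g)\,T\,u_\tau(g)^*)=u_\tau(g)\phi(T)u_\tau(g)^*$ for all $T\in\mathcal{B}(H_\tau)$; combining this with the trace property of $\tau$ on $M$ yields $\omega(u_\tau(g)\,T\,u_\tau(g)^*)=\omega(T)$.

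To convert this $\Ad(u_\tau)$-invariance into amenability of $G$, I would invoke the duality-functor structure of Proposition~\ref{prop:inj-duality-functor} together with Fell's absorption principle. Composing the dual coaction with $\pi_\tau\otimes\lambda$, where $\lambda$ is the left regular representation of $C^*_\max(G)$, produces a $*$-homomorphism $\sigma\colon C^*_\inj(G)\to\mathcal{B}(H_\tau\otimes L^2(G))$ with $\sigma(\delta_g)=u_\tau(g)\otimes\lambda_g$. By Fell's principle there is a unitary $W$ on $H_\tau\otimes L^2(G)$ satisfying $W^*(1\otimes\lambda_g)W=u_\tau(g)\otimes\lambda_g$, and a direct computation (using the explicit formula $W(\xi\otimes h)(g)=u_\tau(g)^{-1}\xi\cdot h(g)$) shows that $W$ commutes with $1\otimes M_f$ for every multiplication operator $M_f$ with $f\in C_{ub}(G)$. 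Combining $\phi$ (amplified to the tensor product) with conjugation by $W$ and the $\Ad(u_\tau)$-invariance of $\omega$, the plan is to manufacture an $\Ad(1\otimes\lambda)$-invariant functional on a $C^*$-subalgebra of $\mathcal{B}(H_\tau\otimes L^2(G))$ containing $1\otimes C_{ub}(G)$ whose restriction to $1\otimes C_{ub}(G)$ is a left-invariant mean on $C_{ub}(G)$, hence proving $G$ amenable.

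The main technical obstacle is executing this last transfer rigorously without the resulting mean degenerating (for instance, to a point-evaluation); naive tensor constructions such as $\omega\otimes\psi$ fail because they would require a prior $\Ad(\lambda)$-invariant state on $\mathcal{B}(L^2(G))$, which is tantamount to amenability of $G$. A likely cleaner alternative, which I would fall back on, is to use $\phi$ and $W$ to directly produce a ucp map $\mathcal{B}(L^2(G))\to L(G)$ extending the inclusion $C^*_r(G)\hookrightarrow L(G)$; this exhibits amenability of the canonical trace on $C^*_r(G)$, and the analogous statement for $C^*_r(G)$ mentioned in the footnote to the proposition then concludes the proof.
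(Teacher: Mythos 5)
The easy direction and the ``in particular'' clause are fine (your route to $C^*_\inj(G)=C^*_{\max}(G)$ for amenable $G$ via the $G$-WEP of $\C$ works, though the paper gets it more cheaply from $C^*_{\max}(G)=C^*_r(G)$), and your first observation in the hard direction --- that $\omega=\tau\circ\phi$ is $\Ad(u_\tau(G))$-invariant because unitaries mapped to unitaries lie in the multiplicative domain --- is correct for discrete $G$ (for non-discrete $G$ the elements $u_\tau(g)$ are only multipliers of $A$, so one needs an approximate-unit argument with $\delta_g*f_i\in C_c(G)$ to see they behave as if in the multiplicative domain). But the proof then stops: you acknowledge yourself that you cannot convert $\Ad(u_\tau)$-invariance of $\omega$ into a left-invariant mean, and you correctly diagnose why the Fell-absorption/tensor route is circular (it presupposes an $\Ad(\lambda)$-invariant state on $\mathcal{B}(L^2(G))$). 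The fallback of producing a ucp map $\mathcal{B}(L^2(G))\to L(G)$ from $\phi$ is also not executed, and it is unclear how it could be: $M=\pi_\tau(C^*_\inj(G))''$ has no a priori relation to $L(G)$. A telling sanity check is that everything you establish before the argument stalls uses nothing specific to $\rtimes_\inj$: the same $\Ad(u_\tau)$-invariant state exists for the amenable trace $1_G$ on $C^*_{\max}(G)$ of any non-amenable $G$ (there $u_\tau$ is trivial and the invariance is vacuous), so no argument of this shape can close without injecting extra structure of $C^*_\inj(G)$.

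The missing idea is to put $C_{ub}(G)$ inside the domain of $\phi$ \emph{from the start}, rather than trying to reintroduce it after passing to the GNS representation. By Proposition \ref{inj exist} the algebra $C_{ub}(G)$ is $G$-injective, so Lemma \ref{realize inj} realizes $C^*_\inj(G)$ as the closure of $C_c(G)$ inside $C_{ub}(G)\rtimes_{\max}G$. Fixing a faithful non-degenerate representation $C_{ub}(G)\rtimes_{\max}G\subseteq\mathcal{B}(H)$ and applying the amenable-trace definition to \emph{this} $\mathcal{B}(H)$, one gets $\phi:\mathcal{B}(H)\to\pi_\tau(A)''$, and the restriction of $\widetilde{m}=\tau\circ\phi$ to $C_{ub}(G)\subseteq\mathcal{B}(H)$ is the desired invariant mean: for an approximate unit $(f_i)$ of $C_c(G)$, the elements $\delta_g*f_i$ lie in $C_c(G)\subseteq C^*_\inj(G)$, hence in the multiplicative domain of $\phi$, the nets $(\delta_g*f_i)a(\delta_g*f_i)^*$ converge in norm to $\alpha_g(a)$ for $a\in C_{ub}(G)$, and the trace property of $\tau$ together with normality of its extension to $\pi_\tau(A)''$ yields $\widetilde{m}(\alpha_g(a))=\widetilde{m}(a)$. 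This is exactly where the definition of $\rtimes_\inj$ enters, and it is the step your proposal does not supply.
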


\begin{proof}
If $G$ is amenable, then $C^*_{\max}(G)=C^*_r(G)$, which forces $C^*_{\max}(G)=C^*_\inj(G)$.  In particular, the trivial representation extends to $C^*_\inj(G)$, and this gives an amenable trace by (a very simple case of) Example \ref{at ex}.  Hence in this case $C^*_\inj(G)$ has an amenable trace.  

Conversely, let $\tau:C^*_\inj(G)\to \C$ be an amenable trace.  Let $A=\widetilde{C^*_\inj(G)}$ be the unitization of $C^*_\inj(G)$ in the non-unital case, or just $A=C^*_\inj(G)$ if this is already unital.  Abuse notation by also writing $\tau:A\to \C$ for the canonical extension.   Fix a non-degenerate embedding $C_{ub}(G)\rtimes_{\max} G\subseteq \mathcal{B}(H)$ and note that Lemmas \ref{inj exist} and \ref{realize inj} give us an embedding 
$$
C^*_\inj(G)\subseteq C_{ub}(G)\rtimes_{\max}G\subseteq \mathcal{B}(H),
$$
and thus also a unital embedding of $A$ into $\mathcal{B}(H)$.  Let $\phi:\mathcal{B}(H)\to \pi_\tau(A)''$ be the ucp map given by the definition of an amenable trace, and let $\tau:\pi_\tau(A)''\to \C$ be the tracial state  induced by $\tau$.  We thus get a state 
$$
\widetilde{m}:\mathcal{B}(H)\to \C, \quad \widetilde{m}:=\tau\circ \phi.
$$ 
We claim that the restriction $m:C_{ub}(G)\to \C$ of $\widetilde{m}$ to $C_{ub}(G)$ is an invariant mean.  Indeed, let $a\in C_{ub}(G)$, write $\alpha$ for the translation action of $G$ on $C_{ub}(G)$, let $g\in G$, and let $(f_i)_{i\in I}$ be an approximate unit in $C_c(G)\subseteq C^*_\inj(G)$.  For each $i$, let $\delta_g* f_i\in C_c(G)$ denote the convolution  of the Dirac mass at $g$ with $f_i$.  Then we have that the net
$$
((\delta_g*f_i)a(\delta_g*f_i)^*)_{i\in I}
$$
converges in the norm of $C_{ub}(G)\rtimes_{\max} G$ to $\alpha_g(a)$.  On the other hand, each $\delta_g*f_i$ is in the multiplicative domain of $\phi$, whence 
$$
m(\alpha_g(a))=\lim_i \tau(\phi((\delta_g*f_i)a(\delta_g*f_i)^*))=\lim_i \tau(\pi_\tau(\delta_g*f_i)\phi(a)\pi_\tau(\delta_g*f_i)^*).
$$
Using that $\tau$ is a trace, this equals $\lim_i \tau(\pi_\tau(f_i^*f_i)\phi(a))$.  As $\pi_\tau:A\to \mathcal{B}(L^2(A,\tau))$ restricts to a nondegenerate representation of $C^*_\inj(G)$, and as $(f_i)$ is an approximate unit for $C^*_\inj(G)$ we have that $\tau(f_i^*f_i)$ converges strongly to the identity operator on $L^2(A,\tau)$; moreover, the canonical extension $\tau:\pi_\tau(A)''\to \C$ is normal, whence in particular strongly continuous on bounded sets.  Thus the net $\lim_i \tau(\pi_\tau(f_i^*f_i)\phi(a))$ converges to $\tau(\phi(a))=m(a)$, completing the proof of invariance of $m$, and thus that $m$ is indeed an invariant mean and $G$ is amenable.

The remaining comments about non-amenable $G$ follow from Example \ref{at ex} and the fact that $C^*_{\max}(G)$ always has at least one finite-dimensional representation (the trivial representation).
\end{proof}

Notice that the amenability condition on a trace $\tau:A\to \C$ has some similarity with the WEP, which we briefly discussed at the end of
Section \ref{basic sec}.
Recall that a \cstar{}algebra $A$ has the WEP if every embedding $A\into B$ 
admits  a ccp map $B\to A^{**}$ which restricts to the identity on $A$. 
By \cite{Brown:2008qy}*{Proposition~3.6.8} this is equivalent to the property that every embedding $A\into B$ induces an embedding $A\otimes_\max D\into B\otimes_\max D$ for every \cstar{}algebra $D$. 
The archetypal example of a \cstar{}algebra with the WEP is the algebra $\mathcal {B}(H)$ of bounded operators on a Hilbert space $H$. On the other hand, the reduced group \cstar{}algebra $C^*_\red(G)$ of a discrete group $G$ has the WEP if and only if $G$ is amenable, see \cite{Brown:2008qy}*{Proposition~3.6.9}. We want to arrive at a similar result for $C^*_\inj(G)$ which gives another hint that $C_\inj^*(G)$  might be equal to 
$C^*_{\red}(G)$. Indeed, we can prove the following general result:

\begin{proposition}\label{prop:inj-WEP}
Let $G$ be a discrete group. If $A$ is a $G$-algebra for which $A\rtimes_\inj G$ has the WEP, then $A\rtimes_\inj G=A\rtimes_\max G$.
\end{proposition}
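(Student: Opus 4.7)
My plan is to mirror the proof of Proposition~\ref{prop-GWEP}: there the $G$-WEP of $A$ yielded an equivariant ccp retraction making $\iota\rtimes_\max G$ injective; here the (non-equivariant) WEP of $A\rtimes_\inj G$ should play the analogous role via the dual coaction and Lance's tensor characterisation of the WEP. First I would embed $A$ equivariantly into a $G$-injective $B$ using Corollary~\ref{g inj emb exist}; by Lemma~\ref{realize inj}, $A\rtimes_\inj G$ identifies with the closure of $C_c(G,A)$ in $B\rtimes_\max G$, so it suffices to prove that the induced $*$-homomorphism $\iota\rtimes_\max G\colon A\rtimes_\max G\to B\rtimes_\max G$ is injective; its image will then coincide with $A\rtimes_\inj G$, forcing $A\rtimes_\max G = A\rtimes_\inj G$.

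Next, I would invoke Lance's tensor characterisation of the WEP: the WEP of $A\rtimes_\inj G$ together with the embedding $A\rtimes_\inj G\hookrightarrow B\rtimes_\max G$ gives, for every $C^*$-algebra $D$, an injection $(A\rtimes_\inj G)\otimes_\max D\hookrightarrow (B\rtimes_\max G)\otimes_\max D$. Take $D=C^*_\max(G)$ and combine with the dual coaction $\hat\alpha_\inj$ of Proposition~\ref{prop:inj-duality-functor} (and the analogous $\hat\beta_\max$ for $B\rtimes_\max G=B\rtimes_\inj G$ via Corollary~\ref{cor max}) to obtain the $*$-homomorphism
$$
\Psi\colon A\rtimes_\max G\to M\bigl((B\rtimes_\max G)\otimes_\max C^*_\max(G)\bigr),\quad au_g\mapsto \iota(a)u_g\otimes g.
$$
This $\Psi$ factors as $\hat\beta_\max\circ(\iota\rtimes_\max G)$, with $\hat\beta_\max$ injective, and also as the WEP-inclusion composed with $\hat\alpha_\inj\circ q_\inj$, where $q_\inj\colon A\rtimes_\max G\twoheadrightarrow A\rtimes_\inj G$ is the canonical surjection; thus injectivity of $\Psi$ is equivalent to injectivity of $\iota\rtimes_\max G$.

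The main obstacle is to actually force $\Psi$ (equivalently $\iota\rtimes_\max G$) to be injective: the two factorizations above on their own only yield the tautological equality $\ker(\iota\rtimes_\max G)=\ker q_\inj$. To close this gap I would adapt the classical ``tracial slicing'' argument used to prove that the WEP of $C^*_r(G)$ forces $G$ to be amenable (\cite{Brown:2008qy}*{Proposition~3.6.9}). Concretely, I would normally extend both the weak expectation $p\colon B\rtimes_\max G\to (A\rtimes_\inj G)^{**}$ and the dual coaction $\hat\alpha_\inj$ to the bidual, and then compose with a slice determined by the canonical conditional expectation $E\colon A\rtimes_\inj G\to A$ to detect any nonzero element of $\ker(\iota\rtimes_\max G)$ via its image in a $G$-invariant ideal of $A$. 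Since $A$ embeds faithfully into $A\rtimes_r G$ (and hence into $A\rtimes_\inj G$), any such ideal must be zero, yielding the conclusion. Making this slicing-and-detection argument rigorous — in particular, verifying that the multiplicative domain of $p$ is large enough to carry the trace-like invariance used in the $A=\C$ case — is expected to be the most delicate step.
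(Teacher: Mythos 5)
Your reduction of the statement to injectivity of $\iota\rtimes_\max G\colon A\rtimes_\max G\to B\rtimes_\max G$ is fine, and you have honestly located the gap; but the device you propose to close it cannot work, and it is not the one the paper uses. Any ``slice'' built from the weak expectation $p\colon B\rtimes_\max G\to (A\rtimes_\inj G)^{**}$ together with a conditional expectation onto $A$ factors through (the bidual of) $A\rtimes_\inj G$, and therefore annihilates $\ker\big(A\rtimes_\max G\onto A\rtimes_\inj G\big)$ by construction; it can never detect a nonzero element of that kernel. The tracial argument you want to imitate does something structurally different: for $A=\C$ it produces an invariant mean, i.e.\ proves $G$ amenable, and only \emph{then} does $C^*_\max(G)=C^*_r(G)$ follow. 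No analogous ``amenability of the action'' output is available for general $A$ (the paper explicitly leaves that as speculation in the final remark of Section~\ref{sec amenable}), and the faithfulness properties of $E$ you invoke live entirely below the reduced crossed product, on the wrong side of the quotient you are trying to control.

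The paper's actual mechanism is duality-theoretic and hinges on two ingredients absent from your proposal. Starting from the embedding $j\colon A\rtimes_\inj G\into \mathcal B(H\otimes\ell^2(G))\otimes_\max C^*_\max(G)$, $a\delta_g\mapsto\widetilde\pi(a)(1\otimes\lambda_g)\otimes\delta_g$ (the untwisted map from the proof of Proposition~\ref{inj llp}, which needs no LLP), the WEP of $A\rtimes_\inj G$ gives an injection $(A\rtimes_\inj G)\otimes_\max C^*_\max(G)\into \mathcal B(H\otimes\ell^2(G))\otimes_\max C^*_\max(G)\otimes_\max C^*_\max(G)$, while the WEP of $\mathcal B(H\otimes\ell^2(G))$ shows that $\id\otimes\Delta$ is injective on $\mathcal B(H\otimes\ell^2(G))\otimes_\max C^*_\max(G)$, where $\Delta\colon\delta_g\mapsto\delta_g\otimes\delta_g$ is the comultiplication. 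Chasing $a\delta_g$ through these two maps yields an injective homomorphism $A\rtimes_\inj G\into (A\rtimes_\inj G)\otimes_\max C^*_\max(G)$ sending $a\delta_g\mapsto a\delta_g\otimes\delta_g$; that is, the dual coaction of Proposition~\ref{prop:inj-duality-functor} is \emph{maximal}, landing injectively in the maximal (not just minimal) tensor product. The conclusion $A\rtimes_\inj G=A\rtimes_\max G$ is then precisely \cite{Buss:2015aa}*{Theorem 5.1}. Note also that your second factorization of $\Psi$ through $\dual\alpha_\inj$ composed with a WEP inclusion already presupposes that the coaction takes values in the maximal tensor product, which is the very maximality one must prove; this is another reason your $\Psi$ remains, as you say, tautological.
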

\begin{proof}
As in the proof of Proposition~\ref{inj llp}, we choose a faithful nondegenerate representation $\pi\colon A\into \mathcal B(H)$ and embed $A\rtimes_\inj G$ into $\mathcal B(H\otimes\ell^2(G))\otimes_\max C^*_\max(G)$ via the diagonal homomorphism $a\delta_g\mapsto \tilde\pi(a)(1\otimes\lambda_g)\otimes\delta_g$. Since $A\rtimes_\inj G$ is assumed to have the WEP, we get an embedding
$$A\rtimes_\inj G\otimes_\max C^*_\max(G)\into\mathcal B(H\otimes\ell^2(G))\otimes_\max C^*_\max(G)\otimes_\max C^*_\max(G).$$
Now we consider the embedding (the comultiplication) 
$$\Delta\colon C^*_\max(G)\into C^*_\max(G)\otimes_\max C^*_\max(G)$$ sending $\delta_g\mapsto \delta_g\otimes\delta_g$.
Using the fact that $\mathcal B(H\otimes\ell^2(G))$ has the WEP, we therefore get an embedding
$$\mathcal B(H\otimes\ell^2(G))\otimes_\max C^*_\max(G)\into \mathcal B(H\otimes\ell^2(G))\otimes_\max C^*_\max(G)\otimes_\max C^*_\max(G).$$
This embedding sends the image of $A\rtimes_\inj G$ in $\mathcal B(H\otimes\ell^2(G))\otimes_\max C^*_\max(G)$ into the image of $A\rtimes_\inj G\otimes_\max C^*_\max(G)$ in $\mathcal B(H\otimes\ell^2(G))\otimes_\max C^*_\max(G)\otimes_\max C^*_\max(G)$. We therefore get an embedding
$$A\rtimes_\inj G\into A\rtimes_\inj G\otimes_\max C^*_\max(G)$$
sending $a\delta_g\mapsto a\delta_g\otimes\delta_g$. It follows from \cite{Buss:2015aa}*{Theorem 5.1} that the dual coaction $\dual\alpha$ from Proposition~\ref{prop:inj-duality-functor} is maximal, which means $A\rtimes_\inj G=A\rtimes_\max G$, as desired.
\end{proof}

\begin{corollary}
For a discrete group $G$, its injective group algebra $C^*_\inj(G):=\C\rtimes_\inj G$ has the WEP if and only if $G$ is amenable.
\end{corollary}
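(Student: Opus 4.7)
The plan is to establish both directions by chaining together the three main results developed earlier in this section: Proposition \ref{group alg}, Example \ref{at ex}, and Proposition \ref{prop:inj-WEP}.

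For the easy direction, suppose $G$ is amenable. Then $C^*_\max(G) = C^*_r(G)$, and since every crossed product norm lies between these two, we get $C^*_\inj(G) = C^*_r(G)$. As $C^*_r(G)$ is nuclear for amenable $G$, it has the WEP, so $C^*_\inj(G)$ does too. (Alternatively, one can quote Proposition \ref{group alg} to get $C^*_\inj(G) = C^*_\max(G)$ directly from amenability.)

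For the harder direction, assume $C^*_\inj(G)$ has the WEP. Apply Proposition \ref{prop:inj-WEP} with $A = \C$ to conclude that $C^*_\inj(G) = C^*_\max(G)$. Now the trivial representation $1_G \colon C^*_\max(G) \to \C$ descends to a tracial state $\tau$ on $C^*_\inj(G)$. Since $1_G$ is a one-dimensional representation, Example \ref{at ex} shows that $\tau$ is an amenable trace on $C^*_\inj(G)$. Proposition \ref{group alg} then forces $G$ to be amenable.

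I do not expect any substantive obstacle: all the heavy lifting has already been done in Propositions \ref{group alg} and \ref{prop:inj-WEP}, and the argument is essentially a two-line assembly once those are in hand. The only point deserving a brief sanity check is that $\tau = 1_G$ genuinely qualifies as amenable in the non-unital case; this is immediate from the convention adopted just before Example \ref{at ex}, since the canonical unital extension of $1_G$ to the unitization is still a character, hence a one-dimensional representation, so Example \ref{at ex} applies verbatim.
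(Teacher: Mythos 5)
Your proof is correct and is essentially the same as the paper's, which simply cites Propositions~\ref{group alg} and~\ref{prop:inj-WEP}; you have merely unpacked the routine details (the trivial representation giving an amenable trace via Example~\ref{at ex}, and nuclearity of $C^*_r(G)$ in the amenable case). Note that for discrete $G$ the algebra $C^*_\inj(G)$ is unital, so the non-unital caveat you raise does not actually arise here.
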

\begin{proof}
This follows directly from Propositions~\ref{group alg} and~\ref{prop:inj-WEP}.
\end{proof}

\begin{remark}
Asking a crossed product to have the WEP is probably a strong restriction. In the above situation it seems to be related to 
the amenability of the underlying action. For example, if $G$ is exact we know that $A\rtimes_\inj G=A\rtimes_\red G$, so the assumption that $A\rtimes_\inj G$ has the WEP implies that $A\rtimes_\max G=A\rtimes_\red G$. If this holds and the crossed product \cstar{}algebra has the WEP, then so does the algebra $A$ as remarked  in \cite{Bhattacharya:2013sj}*{Section~4}.
Moreover, the main result of \cite{Bhattacharya:2013sj} asserts that, assuming the $G$-action on a unital $A$ to be amenable (as defined in \cite{Brown:2008qy}), the crossed product $A\rtimes_\max G=A\rtimes_\red G$ has the WEP if and only if $A$ has the WEP.
\end{remark}

\section{Passing to subgroups}\label{sec subgroups}
Since $\rtimes_\inj$ is a kind of characteristic crossed product functor for a group $G$, it is interesting to see how it behaves 
with respect to passing to subgroups. Recall from \cite{Buss:2015ty}*{Section 6} that given a crossed product functor $\rtimes_\mu$  
its {\em restriction} $\rtimes_{\mu|M}$ to a closed subgroup $M$ is linked to $\rtimes_\mu$ via Green's imprimitivity theorem.
To be more precise, let $(A,\alpha)$ be an $M$-algebra. Then the induced $G$-algebra
$(\Ind_M^GA, \Ind\alpha)$ is defined as
$$\Ind_M^GA:=\left\{F\in C_b(G,A): \begin{matrix} \alpha_h(F(sh))=F(s)\;\forall s\in G, h\in M,\\
\text{and}\; (sM\mapsto \|F(s)\|)\in C_0(G/M)\end{matrix}\right\}$$
and $\big(\Ind\alpha_s(F)\big)(t)=F(s^{-1}t)$ for $F\in \Ind_H^GA$ and $s,t\in G$.
Then Green's imprimitivity theorem provides a canonical $\Ind_M^GA\rtimes_\max G - A\rtimes_\max M$
Morita equivalence $X_M^G(A)$ which is functorial in $A$ (e.g., see \cite{Cuntz:2017nn}*{Chapter 2} for a detailed discussion of 
this theory). Now, given any crossed product functor $\rtimes_\mu$ for $G$, the crossed product $\Ind_M^GA\rtimes_\mu G$ 
is a quotient of $\Ind_M^GA\rtimes_\max G$ by some ideal $I_\mu\subseteq \Ind_M^GA\rtimes_\max G$ which 
corresponds to a unique ideal $J_\mu\subseteq A\rtimes_\max M$ via the Rieffel correspondence such that the 
quotient $X_M^G(A)_\mu:=X_M^G(A)/ (X_M^G(A)\cdot I_\mu)$ becomes an
$$\Ind_M^GA\rtimes_\mu G -A\rtimes_{\mu|M}M:=(A\rtimes_\max M)/J_\mu$$
equivalence bimodule. We show in \cite{Buss:2015ty}*{Section 6}  that $(A,\alpha)\mapsto A\rtimes_{\mu|M}G$ is indeed a crossed product functor 
for $M$ which inherits many important properties from the given functor $\rtimes_\mu$ for $G$.
In what follows next we want to show:

\begin{proposition}\label{prop-restriction}
Let $M$ be a closed subgroup of the locally compact group $G$. Then the restriction $\rtimes_{\inj(G)|M}$ to $M$ of the maximal injective crossed product functor 
$\rtimes_{\inj(G)}$ for $G$ coincides with the maximal injective crossed product $\rtimes_{\inj(M)}$ of $M$.
\end{proposition}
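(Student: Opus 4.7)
The plan is to prove the two inequalities $\rtimes_{\inj(G)|M}\leq\rtimes_{\inj(M)}$ and $\rtimes_{\inj(M)}\leq\rtimes_{\inj(G)|M}$ separately. The first is the easy direction: I would check that $\rtimes_{\inj(G)|M}$ is itself an injective crossed product functor on $M$-algebras, whereupon the maximality of $\rtimes_{\inj(M)}$ (Proposition~\ref{inj inj}) gives the inequality. Given an $M$-equivariant injection $A\into A'$, exactness of induction yields a $G$-equivariant injection $\Ind_M^G A\into\Ind_M^G A'$; injectivity of $\rtimes_{\inj(G)}$ gives $\Ind_M^G A\rtimes_{\inj(G)}G\into\Ind_M^G A'\rtimes_{\inj(G)}G$; and the functoriality of Green's imprimitivity bimodule together with the Rieffel correspondence transport this to an injection $A\rtimes_{\inj(G)|M}M\into A'\rtimes_{\inj(G)|M}M$.

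The reverse direction reduces to the following key claim: for every $M$-injective $M$-algebra $B$, one has $B\rtimes_{\inj(G)|M}M = B\rtimes_\max M$. Granted this, embed the given $M$-algebra $A$ into such a $B$ via Corollary~\ref{g inj emb exist} applied to $M$; injectivity of $\rtimes_{\inj(G)|M}$ yields an injection $A\rtimes_{\inj(G)|M}M\into B\rtimes_{\inj(G)|M}M=B\rtimes_\max M$, whose image is precisely $A\rtimes_{\inj(M)}M$ by Lemma~\ref{realize inj} applied to $M$, giving the desired equality.

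To prove the key claim, Green's imprimitivity reduces it to $\Ind_M^G B\rtimes_{\inj(G)}G = \Ind_M^G B\rtimes_\max G$, which by Proposition~\ref{prop-GWEP} follows once $\Ind_M^G B$ has the $G$-WEP. I would establish the $G$-WEP by introducing the auxiliary $G$-algebra $\widetilde B$, namely the $G$-continuous part of $\contb^M(G,B)$, the algebra of bounded continuous $M$-equivariant functions $G\to B$ equipped with the translation $G$-action $(\gamma_s F)(t)=F(s^{-1}t)$. Two facts are needed: (i)~$\widetilde B$ is $G$-injective, proved by an equivariant adaptation of Proposition~\ref{inj exist}: for any $G$-equivariant ccp $\phi\colon C\to\widetilde B$, evaluation at~$e$ produces an $M$-equivariant ccp map $\phi_e\colon C|_M\to B$ which by $M$-injectivity of $B$ extends along any $G$-equivariant (hence $M$-equivariant) injection $C\into D$, and the extension of $\phi$ is reconstructed from this by translation; and (ii)~$\Ind_M^G B\sbe\widetilde B\sbe(\Ind_M^G B)^{**}_c$, the second inclusion realized via pointwise multiplication, identifying $\widetilde B$ with a $G$-invariant $\Cst$-subalgebra of $\M(\Ind_M^G B)\sbe(\Ind_M^G B)^{**}$. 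Given these, any $G$-equivariant embedding $\Ind_M^G B\into D$ admits by (i) an equivariant ccp extension $D\to\widetilde B$ of the inclusion $\Ind_M^G B\into\widetilde B$, and composition with (ii) supplies the required weak expectation $D\to(\Ind_M^G B)^{**}_c$.

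The main obstacle is identifying the correct intermediate algebra $\widetilde B$. A first guess is that $\Ind_M^G B$ itself should be $G$-injective whenever $B$ is $M$-injective, but this fails already for $G=\Z$ and $M=\{e\}$: no $\Z$-equivariant ccp retraction $\ell^\infty(\Z,B)\to\contz(\Z,B)$ of the inclusion can exist, because the $\Z$-invariants of $\contz(\Z,B)$ vanish and positivity then forces such a map to kill every positive element of $\contz(\Z,B)$. Replacing the $\contz$-decay condition by mere boundedness and passing to the $G$-continuous part restores $G$-injectivity, while the multiplier embedding keeps the enlarged algebra inside $(\Ind_M^G B)^{**}_c$; this is the crucial trick making the whole argument work.
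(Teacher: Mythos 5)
Your proposal is correct, and its essential new ingredient coincides with the paper's: the auxiliary algebra you call $\widetilde B$ is exactly the algebra $I_M^G(B)$ of bounded $G$-continuous (equivalently, uniformly continuous) $M$-equivariant $B$-valued functions on $G$ that the paper introduces in Lemma~\ref{lem inj induced}, and your translation argument for its $G$-injectivity is the same adaptation of Proposition~\ref{inj exist}; your observation that $\Ind_M^GB$ itself fails to be $G$-injective (the $c_0$ versus $\ell^\infty$ example) correctly identifies why the enlargement is needed. Where you genuinely diverge is in how this lemma is exploited. The paper runs a single Rieffel-correspondence computation: it matches the kernel of $\varphi\rtimes_\max M$ with that of $\Ind\varphi\rtimes_\max G$ via functoriality of $X_M^G$, and closes the decisive step by noting that $\Ind_M^GB$ sits as a $G$-invariant \emph{ideal} in $I_M^G(B)$, so the ideal property of $\rtimes_\max$ makes $\Ind_M^GB\rtimes_\max G\to I_M^G(B)\rtimes_\max G$ faithful and forces the $\inj(G)$-kernel to equal the one coming from $B$. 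You instead organize the argument as two inequalities and close the same decisive step by establishing the $G$-WEP for $\Ind_M^GB$ via the sandwich $\Ind_M^GB\sbe I_M^G(B)\sbe\M(\Ind_M^GB)\sbe(\Ind_M^GB)^{**}_c$ and then invoking Proposition~\ref{prop-GWEP}. Both routes are valid: the paper's is lighter, needing only the (elementary) ideal property of $\rtimes_\max$ rather than the verification that the multiplier embedding lands in the $G$-continuous part of the double dual; yours reuses the $G$-WEP machinery already developed in Section~\ref{basic sec} and yields as a by-product the statement, not isolated in the paper, that $\Ind_M^GB$ has the $G$-WEP whenever $B$ is $M$-injective. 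Do note that your first inequality (injectivity of the restricted functor, via exactness of induction and compatibility of the Rieffel correspondence with bimodule morphisms) is carrying real weight in your second step, so in a written version you should spell out that the kernel-matching under $\Psi\colon X_M^G(A)\to X_M^G(A')$ is what transports injectivity across the correspondence --- this is the same point the paper relies on when it asserts that $J_\alpha$ is matched to $I_\alpha$.
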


For the proof we need the following lemma, which is a variant of Proposition \ref{inj exist}:

\begin{lemma}\label{lem inj induced}
Suppose that $M$ is a closed subgroup of $G$ and let $(B,\beta)$ be an $M$-injective $M$-algebra. Let 
$$I_M^G(B):=\left\{F\in C_{ub}(G,A): \beta_h(F(sh))=F(s)\;\forall s\in G, h\in M\right\}$$
equipped with $G$-action $\big(I(\beta)_s(F)\big)(t)=F(s^{-1}t)$. Then 
$\big(I_M^G(B), I(\beta)\big)$ is $G$-injective.
\end{lemma}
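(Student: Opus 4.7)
The strategy closely parallels Proposition \ref{inj exist}, with the role of ``injectivity of $B$'' replaced by ``$M$-injectivity of $B$''. So given an equivariant ccp map $\phi\colon A\to I_M^G(B)$ (where $(A,\alpha)$ is a $G$-$C^*$-algebra) and an equivariant embedding $\iota\colon A\into C$, the plan is to evaluate at the identity of $G$, extend the resulting ccp map using $M$-injectivity of $B$, and then reassemble an equivariant ccp extension $\widetilde{\phi}\colon C\to I_M^G(B)$ by the formula $\widetilde{\phi}(c)(g):=\widetilde{\psi}(\alpha_{g^{-1}}(c))$.

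First I would verify the key preliminary observation: the evaluation map $\delta_e\colon I_M^G(B)\to B$, $F\mapsto F(e)$, is $M$-equivariant when $I_M^G(B)$ is viewed as an $M$-algebra via restriction of $I(\beta)$. This uses the defining relation of $I_M^G(B)$: for $h\in M$, $I(\beta)_h(F)(e)=F(h^{-1})=\beta_h(F(h^{-1}\cdot h))=\beta_h(F(e))$. Therefore $\psi:=\delta_e\circ \phi\colon A\to B$ is an $M$-equivariant ccp map, and $\iota\colon A\into C$ remains an $M$-equivariant embedding when both are restricted to $M$. By $M$-injectivity of $B$, there exists an $M$-equivariant ccp map $\widetilde{\psi}\colon C\to B$ with $\widetilde{\psi}\circ\iota=\psi$.

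Next I would define $\widetilde{\phi}\colon C\to I_M^G(B)$ by $\widetilde{\phi}(c)(g):=\widetilde{\psi}(\alpha_{g^{-1}}(c))$ and check the four required properties. (i) The function $g\mapsto \widetilde{\psi}(\alpha_{g^{-1}}(c))$ is bounded (by $\|c\|$) and uniformly continuous, using that the orbit maps of a strongly continuous isometric action are uniformly continuous for the left-invariant uniform structure; this is exactly as in Proposition \ref{inj exist}. (ii) The $I_M^G(B)$-condition $\beta_h(\widetilde{\phi}(c)(sh))=\widetilde{\phi}(c)(s)$ follows from the $M$-equivariance of $\widetilde{\psi}$: $\beta_h(\widetilde{\psi}(\alpha_{h^{-1}}\alpha_{s^{-1}}(c)))=\widetilde{\psi}(\alpha_{s^{-1}}(c))$. (iii) $G$-equivariance is a direct calculation: $\widetilde{\phi}(\alpha_t(c))(g)=\widetilde{\psi}(\alpha_{(t^{-1}g)^{-1}}(c))=I(\beta)_t(\widetilde{\phi}(c))(g)$. (iv) Via the identification $M_n(I_M^G(B))\subseteq M_n(C_{ub}(G,B))=C_{ub}(G,M_n(B))$, pointwise ccp-ness implies $\widetilde{\phi}$ is ccp, since $c\mapsto \widetilde{\psi}(\alpha_{g^{-1}}(c))$ is ccp for each fixed $g$. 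Finally $\widetilde{\phi}\circ\iota=\phi$ follows from $\widetilde{\psi}\circ\iota=\psi=\delta_e\circ\phi$ combined with $G$-equivariance of $\phi$ and $\iota$.

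The only real subtlety is the $M$-equivariance of $\delta_e$, which is exactly what the induced algebra structure is designed to guarantee; once that is in hand the remainder is a repetition of the proof of Proposition \ref{inj exist}. No serious obstacle is expected.
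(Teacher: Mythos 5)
Your proposal is correct and follows exactly the route the paper intends: the paper's own proof consists of the single sentence that the argument is almost identical to that of Proposition \ref{inj exist} and is left to the reader, and your write-up supplies precisely those details. In particular, you correctly isolate the one genuinely new point, namely that the defining relation of $I_M^G(B)$ makes evaluation at $e$ an $M$-equivariant map $I_M^G(B)\to B$, which is what allows the $M$-injectivity of $B$ to be invoked in place of plain injectivity.
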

\begin{proof} The proof is almost identical to the proof of Proposition \ref{inj exist} and is left to the reader.
\end{proof}

\begin{proof}[Proof of Proposition \ref{prop-restriction}]
Let $A$ be any $M$-algebra and let $\varphi:A\into B$ be an $M$-equivariant embedding of $A$ into the $M$-injective 
algebra $B$. By functoriality of  Green's imprimitivity bimodule, we obtain a morphism of imprimitivity bimodules
$\Psi: X_M^G(A)\to X_M^G(B)$ which is compatible with the $*$-homomorphisms $A\rtimes_\max M\to B\rtimes_\max M$ 
and $\Ind_M^GA\rtimes_\max G\to \Ind_M^GB\rtimes_\max G$ induced from the equivariant 
morphisms $\varphi:A\into B$ and its induced form $\Ind\varphi:\Ind_M^GA\into \Ind_M^GB$.
It follows that the kernel $J_\alpha:=\ker\varphi$ is matched to the kernel $I_\alpha:=\ker(\Ind\varphi)$ via 
the Rieffel correspondence with respect to $X_M^G(A)$. Since $B$ is $M$-injective, it follows that the 
quotient $(A\rtimes_\max M)/J_\alpha$ coincides with 
$A\rtimes_{\inj(M)}M$. On the other side we observe that
$\Ind_M^GB$ naturally embeds as a $G$-invariant ideal into $I_M^G(B)$, and therefore 
$\Ind_M^GA$ embeds into the $G$-injective algebra $I_M^G(B)$ via the composition
$$\Ind_M^GA\stackrel{\Ind\varphi}{\into} \Ind_M^GB\into I_M^G(B).$$
Therefore the injective crossed product $\Ind_M^GA\rtimes_{\inj(G)}G$ is equal to the quotient
$(\Ind_M^GA\rtimes_{\max}G)/I_{\inj(G)}$, where $I_{\inj(G)}$ is the kernel of the composition 
$$\Ind_M^GA\rtimes_\max G\to \Ind_M^GB\rtimes_\max G \to I_M^G(B)\rtimes_\max G.$$
But since $\rtimes_\max$ enjoys the ideal property, we see that the second map in this composition is faithful.
Therefore $I$ coincides with the kernel of the first map, which is $I_\alpha$. It follows that 
$A\rtimes_{\inj(M)}M$ is linked to $\Ind_M^GA\rtimes_{\inj(G)}G$ via the Rieffel correspondence
 for $X_M^G(A)$, which proves that $\rtimes_{\inj(M)}=\rtimes_{\inj(G)|M}$.
\end{proof}

\begin{remark}\label{rem G-M-injetive}
Let $M$ be an {\em open} subgroup of the locally compact group $G$ and let $H$ be a Hilbert space.
 Since $\mathcal B(H)$ is an injective $C^*$-algebra, it follows from 
 Proposition \ref{inj exist} that $C_{ub}(G, \mathcal B(H))$ is an injective $G$-algebra. 
 We claim that $C_{ub}(G, \mathcal B(H))$ is also $M$-injective with respect to the restriction of the translation action to $M$.
 To see this we choose a section $\mathfrak s: M\backslash G\to G$ for the space of left $M$-cosets in $G$. 
 Since $M$ is open in $G$, the quotient $M\backslash G$ is a discrete space and we obtain an $M$-equivariant isomorphism
 $$\Psi: C_{ub}(G, \mathcal B(H))\to C_{ub}(M, \ell^\infty(M\backslash G, \mathcal B(H))); \Psi(f)(m,\dot{g})=f(m\cdot \mathfrak s(\dot{g})).$$
 Since $\ell^\infty(M\backslash G, \mathcal B(H))$ is an injective von Neumann algebra (because it is type I), it follows from 
  Proposition \ref{inj exist} that $C_{ub}(M, \ell^\infty(M\backslash G, \mathcal B(H)))$, and hence also $C_{ub}(G, \mathcal B(H))$
  is an injective $M$-algebra.
 \end{remark}

For the maximal and reduced crossed products it is well known that for any open subgroup $M$ of a
locally compact group $G$ and any $G$-algebra $A$, we get an injective embedding of the crossed product
by $M$ into the crossed product by $G$, extending the canonical  inclusion $\iota: C_c(M,A)\to C_c(G,A)$. 
From the above remark we immediately obtain the same property for the injective crossed product:

\begin{proposition}\label{prop subgroup}
Suppose that $M$ is an open subgroup of the locally compact group $G$. Then, if $(A, \alpha)$ is a $G$-algebra, the inclusion 
$\iota: C_c(M,A)\into C_c(G,A)$ extends to a faithful inclusion $A\rtimes_\inj M\into A\rtimes_\inj G$.
\end{proposition}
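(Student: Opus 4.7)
The plan is to reduce the statement to the analogous, well-known property for the maximal crossed product by choosing a single algebra that serves simultaneously as a $G$-injective and an $M$-injective envelope of $A$, and then applying Lemma~\ref{realize inj} in both settings. First, fix a faithful nondegenerate representation $\pi\colon A\to \mathcal B(H)$ and, following Corollary~\ref{g inj emb exist}, set $B:=C_{ub}(G,\mathcal B(H))$ with its translation $G$-action, together with the equivariant embedding $\widetilde\pi\colon A\into B$ given by $\widetilde\pi(a)(g):=\pi(\alpha_{g^{-1}}(a))$. Proposition~\ref{inj exist} shows that $B$ is $G$-injective, and Remark~\ref{rem G-M-injetive} shows that $B$ is also $M$-injective when its action is restricted to the \emph{open} subgroup $M$; clearly $\widetilde\pi$ remains an $M$-equivariant embedding.

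Next, I would apply Lemma~\ref{realize inj} twice. Regarded as a $G$-injective embedding, $\widetilde\pi$ identifies $A\rtimes_\inj G$ with the closure of $C_c(G,A)$ inside $B\rtimes_\max G$. Regarded as an $M$-injective embedding, it identifies $A\rtimes_\inj M$ with the closure of $C_c(M,A)$ inside $B\rtimes_\max M$. At this point the proposition becomes a question purely about maximal crossed products: one needs the inclusion $C_c(M,A)\into C_c(G,A)$ to close up faithfully inside $B\rtimes_\max G$, with closure matching the one computed inside $B\rtimes_\max M$.

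Finally, I would invoke the well-known fact that for an open subgroup $M\leq G$ and any $G$-algebra $B$, the canonical inclusion $C_c(M,B)\into C_c(G,B)$ extends to a faithful inclusion $B\rtimes_\max M\into B\rtimes_\max G$. Combined with the preceding step, this yields a commuting square
$$\xymatrix{A\rtimes_\inj M \ar[r] \ar[d] & A\rtimes_\inj G \ar[d] \\ B\rtimes_\max M \ar[r] & B\rtimes_\max G}$$
in which the two vertical arrows and the bottom horizontal arrow are all injective, forcing the top horizontal arrow to be injective as well; by construction it extends $\iota\colon C_c(M,A)\into C_c(G,A)$.

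The only nontrivial ingredient beyond routine diagram chasing is Remark~\ref{rem G-M-injetive}, which supplies a single envelope $B$ that is injective both for $G$ and for the restricted action of $M$. Without this joint injectivity one could not identify the two injective crossed products as closures inside the same ambient algebra, and the compatibility argument would break down; openness of $M$ enters precisely here, via the discreteness of $M\backslash G$ used in that remark.
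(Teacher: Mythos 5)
Your proposal is correct and is essentially the paper's own argument: both fix the envelope $B=C_{ub}(G,\mathcal B(H))$, which is simultaneously $G$-injective (Proposition~\ref{inj exist}) and $M$-injective for the open subgroup $M$ (Remark~\ref{rem G-M-injetive}), apply Lemma~\ref{realize inj} to realize both injective crossed products inside the corresponding maximal crossed products of $B$, and then conclude from the standard faithful inclusion $B\rtimes_\max M\into B\rtimes_\max G$ via the same commuting square. No gaps.
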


\begin{proof}
Let $\pi:A\to \B(H)$ be a faithful representation of $A$ on Hilbert space. Let $\tilde\pi:A\to B:=C_{ub}(G, \B(H))$
be the map sending $a$ to the function $[g\mapsto \pi(\alpha_{g^{-1}}(a))]\in B$. Then it follows from the 
above remark together with Lemma \ref{realize inj} that we get the following  commutative diagram of maps
$$
\xymatrix{ A\rtimes_\inj G \ar@{^(->}[r] &B\rtimes_\max G\\
A\rtimes_\inj M \ar@{^(-->}[u]  \ar@{^(->}[r] &B\rtimes_\max M \ar@{^(->}[u]}
$$
where the broken arrow exists and extends the inclusion $\iota:C_c(M,A)\to C_c(G,A)$ 
because of injectivity of all other maps in the diagram 
and commutativity on the level of $C_c(M,A)$.
\end{proof}

\section{Questions}\label{sec questions}

\begin{enumerate}
\item What is $\C\rtimes_\inj G$?  The only information we currently have comes from Proposition \ref{group alg} in general, plus Proposition \ref{inj ex} and Proposition \ref{inj llp} in some special cases.  All of these results provide some evidence that $C^*_\inj(G)$ might be equal to $C^*_r(G)$ in general, but we have no strong feeling about this.

For a discrete group $G$, using the representation from the proof of Proposition~\ref{inj llp}, notice that $C^*_\inj (G)$ identifies with the $C^*$-algebra generated by the ``diagonal'' representation
$$G\to \B(\ell^2(G))\otimes_{\max} C^*_{\max}(G),\quad g\mapsto \lambda_g\otimes\delta_g.$$
It follows that $C^*_\inj(G)=C^*_r (G)$ iff this representation factors through $C^*_r(G)$. Is this always true? We know that it is true if $G$ is exact or $C^*_{\max}(G)$ has the LLP.  Similarly, we have that for any locally compact $G$, $C^*_{\inj}(G)$ agrees with the image of the natural map
$$
C^*_{\max}(G)\to C_{ub}(G)\rtimes_{\max}G
$$
induced by the unit inclusion $\C\to C_{ub}(G)$, and one can ask if this map always factors through the reduced group $C^*$-algebra.

\item Does the LLP for $C^*_{\max}(G)$ imply exactness of $G$?  Evidence for a positive answer is provided by Remark \ref{llp implies exact}, and the fact that Corollary \ref{not llp} shows that the `best understood' examples of non-exact groups are such that $C^*_{\max}(G)$ does not have the LLP.  Note that the converse is false by Remark \ref{exact implies llp}.

\item More generally, is $\rtimes_\inj$ always different from $\rtimes_r$ for non-exact groups?  This would be implied by $C_{ub}(G)\rtimes_{\max}G\neq C_{ub}(G)\rtimes_r G$ for all non-exact groups, which matches the (scant) available evidence.

\item Is $\rtimes_\inj$ exact?  More generally, can a non-exact group admit a crossed product functor that is both exact and injective?
It would also be interesting to compare the injective crossed product functor $\rtimes_\inj$ with the minimal exact crossed product functor 
$\rtimes_{\mathcal E}$ of \cite{Buss:2018pw}. Both functors agree for exact groups with the reduced crossed product functor and 
so far we do not know of any example of a group $G$ for which $\rtimes_{\inj}\neq \rtimes_{\mathcal E}$. 

\item 
Is $\rtimes_\inj$ a KLQ-functor? This is related to  Proposition \ref{prop:inj-duality-functor}. More precisely, it is equivalent to the existence of a faithful homomorphism 
$$A\rtimes_\inj G\into \M(A\rtimes_\max G\otimes C^*_\inj(G))$$ 
extending the representation $a\delta_g\mapsto a\delta_g\otimes \delta_g$.
Notice that if $C^*_\inj(G)=C^*_\red(G)$, then $\rtimes_\inj$ can only be a KLQ-functor if it equals $\rtimes_\red$. 
\end{enumerate}

\bibliographystyle{abbrv}

\begin{bibdiv}
\begin{biblist}

\bib{Baum:2013kx}{article}{
      author={Baum, Paul},
      author={Guentner, Erik},
      author={Willett, Rufus},
       title={Expanders, exact crossed products, and the {B}aum-{C}onnes
  conjecture},
        date={2015},
     journal={Ann. ${K}$-theory},
      volume={1},
      number={2},
       pages={155\ndash 208},
}

\bib{Bhattacharya:2013sj}{article}{
      author={Bhattacharya, Angshuman},
      author={Farenick, Douglas},
       title={Crossed products of ${C^*}$-algebras with the weak expectation
  property},
        date={2013},
     journal={New York J. Math.},
      volume={19},
       pages={423\ndash 429},
}

\bib{Brodzki:2015kb}{article}{
      author={Brodzki, Jacek},
      author={Cave, Christopher},
      author={Li, Kang},
       title={Exactness of locally compact second countable groups},
        date={2017},
     journal={Adv. Math.},
      volume={312},
       pages={209\ndash 233},
}

\bib{Brown:2006aa}{article}{
      author={Brown, Nathanial},
       title={Invariant means and finite representation theory of
  ${C^*}$-algebras},
        date={2006},
     journal={Mem. Amer. Math. Soc.},
      volume={184},
      number={865},
       pages={vi+105pp},
}

\bib{Brown:2011fk}{article}{
      author={Brown, Nathanial},
      author={Guentner, Erik},
       title={New ${C}^*$-completions of discrete groups and related spaces},
        date={2013},
     journal={Bull. London Math. Soc.},
      volume={45},
      number={6},
       pages={1181\ndash 1193},
}

\bib{Brown:2008qy}{book}{
      author={Brown, Nathanial},
      author={Ozawa, Narutaka},
       title={${C}^*$-algebras and finite-dimensional approximations},
      series={Graduate Studies in Mathematics},
   publisher={American Mathematical Society},
        date={2008},
      volume={88},
}

\bib{Buss:2015aa}{article}{
      author={Buss, Alcides},
      author={Echterhoff, Siegfried},
       title={Maximality of dual coactions on sectional ${C^*}$-algebras of
  {F}ell bunles and applications},
        date={2015},
     journal={Studia Math.},
      volume={229},
      number={3},
       pages={233\ndash 262},
}

\bib{Buss:2015ty}{inproceedings}{
      author={Buss, Alcides},
      author={Echterhoff, Siegfried},
      author={Willett, Rufus},
       title={Exotic crossed products},
        date={2015},
   booktitle={Operator algebras and applications},
      editor={Carlsen, Toke~M.},
      editor={Larsen, Nadia~S.},
      editor={Neshveyev, Sergey},
      editor={Skau, Christian},
      series={The Abel Symposium},
   publisher={Springer},
       pages={61\ndash 108},
}

\bib{Buss:2014aa}{article}{
      author={Buss, Alcides},
      author={Echterhoff, Siegfried},
      author={Willett, Rufus},
       title={Exotic crossed products and the {B}aum--{C}onnes conjecture},
        date={2018},
        ISSN={0075-4102},
     journal={J. Reine Angew. Math.},
      volume={740},
       pages={111\ndash 159},
         url={https://doi.org/10.1515/crelle-2015-0061},
      review={\MR{3824785}},
}

\bib{Buss:2018pw}{unpublished}{
      author={Buss, Alcides},
      author={Echterhoff, Siegfried},
      author={Willett, Rufus},
       title={The minimal exact crossed product},
        date={2018},
        note={arXiv:1804.02725},
}

\bib{Cuntz:2017nn}{book}{
      author={Cuntz, Joachim},
      author={Echterhoff, Siegfried},
      author={Li, Xin},
      author={Yu, Guoliang},
       title={${K}$-theory for group ${C^*}$-algebras and semigroup
  ${C^*}$-algebras},
      series={Oberwolfach Seminars},
   publisher={Birkh\"{a}user},
        date={2017},
}

\bib{Echterhoff:2006aa}{article}{
      author={Echterhoff, Siegfried},
      author={Kaliszewski, S.},
      author={Quigg, John},
      author={Raeburn, Iain},
       title={A categorical approach to imprimitivity theorems for
  ${C^*}$-dynamical systems},
        date={2006},
     journal={Mem. Amer. Math. Soc.},
      volume={180},
}

\bib{Guentner:2013aa}{article}{
      author={Guentner, Erik},
      author={Tessera, Romain},
      author={Yu, Guoliang},
       title={Discrete groups with finite decomposition complexity},
        date={2013},
     journal={Groups, Geometry and Dynamics},
      volume={7},
      number={2},
       pages={377\ndash 402},
}

\bib{Hamana:1985aa}{article}{
      author={Hamana, Masamichi},
       title={Injective envelopes of ${C^*}$-dynamical systems},
        date={1985},
     journal={Tohoku Math. J.},
      volume={37},
       pages={463\ndash 487},
}

\bib{Kaliszewski:2012fr}{article}{
      author={Kaliszewski, S.},
      author={Landstad, Magnus},
      author={Quigg, John},
       title={Exotic group ${C}^*$-algebras in noncommutative duality},
        date={2013},
     journal={New York J. Math.},
      volume={19},
       pages={689\ndash 711},
}

\bib{Osajda:2014ys}{unpublished}{
      author={Osajda, Damian},
       title={Small cancellation labellings of some infinite graphs and
  applications},
        date={2014},
        note={arXiv:1406.5015},
}

\bib{Ozawa:2000th}{article}{
      author={Ozawa, Narutaka},
       title={Amenable actions and exactness for discrete groups},
        date={2000},
     journal={C. R. Acad. Sci. Paris S{\'e}r. I Math.},
      volume={330},
       pages={691\ndash 695},
}

\bib{Ozawa:2004ab}{article}{
      author={Ozawa, Narutaka},
       title={About the {QWEP} conjecture},
        date={2004},
     journal={Internat. J. Math.},
      volume={15},
       pages={501\ndash 530},
}

\bib{Ozawa:2004aa}{article}{
      author={Ozawa, Narutaka},
       title={There is no separable universal {$II_1$}-factor},
        date={2004},
     journal={Proc. Amer. Math. Soc.},
      volume={132},
      number={2},
       pages={487\ndash 490},
}

\bib{Roe:2013rt}{article}{
      author={Roe, John},
      author={Willett, Rufus},
       title={Ghostbusting and property {A}},
        date={2014},
     journal={J. Funct. Anal.},
      volume={266},
      number={3},
       pages={1674\ndash 1684},
}

\bib{Ruan:2016kl}{article}{
      author={Ruan, Zhong-Jin},
      author={Wiersma, Matthew},
       title={On exotic group ${C^*}$-algebras},
        date={2016},
     journal={J. Funct. Anal.},
      volume={271},
      number={2},
       pages={437\ndash 453},
}

\bib{Thom:2010aa}{article}{
      author={Thom, Andreas},
       title={Examples of hyperlinear groups without the factorization
  property},
        date={2010},
     journal={Groups, Geom. Dynam.},
      volume={4},
       pages={195\ndash 208},
}

\bib{Wiersma:2015tx}{article}{
      author={Wiersma, Matthew},
       title={${L}^p$-{F}ourier and {F}ourier-{S}tieltjes algebras for locally
  compact groups},
        date={2015},
     journal={J. Funct. Anal.},
      volume={269},
      number={12},
       pages={3928\ndash 3951},
}

\bib{Wiersma:2016kr}{article}{
      author={Wiersma, Matthew},
       title={Constructions of exotic group ${C^*}$-algebras},
        date={2016},
     journal={Illinois J. Math},
      volume={60},
      number={3-4},
       pages={655\ndash 667},
}

\bib{Willett:2010ud}{article}{
      author={Willett, Rufus},
      author={Yu, Guoliang},
       title={Higher index theory for certain expanders and {G}romov monster
  groups {I}},
        date={2012},
     journal={Adv. Math.},
      volume={229},
      number={3},
       pages={1380\ndash 1416},
}

\bib{Willett:2010zh}{article}{
      author={Willett, Rufus},
      author={Yu, Guoliang},
       title={Higher index theory for certain expanders and {G}romov monster
  groups {II}},
        date={2012},
     journal={Adv. Math.},
      volume={229},
      number={3},
       pages={1762\ndash 1803},
}

\bib{Willett:2013cr}{article}{
      author={Willett, Rufus},
      author={Yu, Guoliang},
       title={Geometric property ({T})},
        date={2014},
     journal={Chinese Ann. Math. Ser. B},
      volume={35},
      number={5},
       pages={761\ndash 800},
}

\end{biblist}
\end{bibdiv}

\end{document}